\documentclass[12pt,a4paper]{amsart}
\usepackage{graphicx}
\usepackage{latexsym}
\usepackage{amsxtra}
\usepackage{amsmath}
\usepackage{amsfonts}
\usepackage{amssymb}


\newtheorem{theorem}{Theorem}[section]
\newtheorem{corollary}[theorem]{Corollary}
\newtheorem{lemma}[theorem]{Lemma}

\setlength{\textheight}{22.3cm}
\addtolength{\hoffset}{-1,5cm}
\addtolength{\textwidth}{3cm}
\addtolength{\voffset}{-1,5cm}
\addtolength{\textheight}{1cm}

\usepackage[figuresright]{rotating}

\title{Densities of the Raney distributions}
\author{Wojciech M{\l}otkowski, Karol A. Penson, Karol \.{Z}yczkowski}
\thanks{
K.~\.{Z}. is supported  by the Grant DEC-2011/02/A/ST1/00119
of Polish National Centre of Science.
K.~A.~P. acknowledges support from PAN/CNRS  under Project PICS No.
4339 and from Agence Nationale de la Recherche (Paris, France) under
Program PHYSCOMB No. ANR-08-BLAN-0243-2.}
\address{Instytut Matematyczny,
Uniwersytet Wroc{\l}awski,
Plac~Grunwaldzki~2/4,
50-384 Wroc{\l}aw, Poland}
\email{mlotkow@math.uni.wroc.pl}
\address{Laboratoire de Physique Th\'{e}orique de la Mati\`{e}re
Condens\'{e}e (LPTMC), Universit\'{e} Pierre et Marie Curie, CNRS UMR
7600, Tour 13 - 5i\`{e}me \'{e}t., Bo\^{i}te Courrier 121, 4 place
Jussieu, F 75252 Paris Cedex 05, France}
\email{penson@lptl.jussieu.fr}
\address{Institute of Physics, Jagiellonian University, Cracow
and Center for Theoretical Physics, Polish Academy of Sciences,
Warsaw, Poland}
\email{karol@tatry.if.uj.edu.pl}
\subjclass[2010]{Primary 44A60; Secondary 33C20}
\keywords{Mellin convolution, free convolution, Meijer function}
\begin{document}

\begin{abstract}
We prove that if $p\ge1$ and $0< r\le p$ then the sequence $\binom{mp+r}{m}\frac{r}{mp+r}$,
$m=0,1,2,\ldots$, is positive definite,
more precisely, is the moment sequence of a probability measure
$\mu(p,r)$ with compact support contained in $[0,+\infty)$.
This family of measures encompasses the
multiplicative free powers of the Marchenko-Pastur distribution
as well as the Wigner's semicircle distribution centered at $x=2$.
We show that if $p>1$ is a rational number, $0<r\le p$, then $\mu(p,r)$
is absolutely continuous and its density $W_{p,r}(x)$ can be expressed
in terms of the Meijer and the generalized hypergeometric functions. In some cases,
including the multiplicative free square and the multiplicative free square root
of the Marchenko-Pastur measure, $W_{p,r}(x)$ turns out to be an elementary function.
\end{abstract}

\maketitle
\today

\section*{Introduction}

For $p,r\in\mathbb{R}$ we define the \textit{Raney numbers}
(or \textit{two-parameter Fuss-Catalan numbers}) by
\begin{equation}\label{intraneysequence}
A_{m}(p,r):=
\frac{r}{m!}\prod_{i=1}^{m-1}(mp+r-i),
\end{equation}
$A_0(p,r):=1$. For $m=0,1,2,\ldots$ we can also write
\begin{equation}
A_{m}(p,r)=\binom{mp+r}{m}\frac{r}{mp+r},
\end{equation}
(provided  $mp+r\ne0$) where the generalized binomial is defined by
\[\binom{a}{m}:=\frac{a(a-1)\ldots(a-m+1)}{m!}.\]
Let $\mathcal{B}_{p}(z)$ denote the generating function of the sequence
$\left\{A_{m}(p,1)\right\}_{m=0}^{\infty}$, the \textit{Fuss numbers of order $p$}:
\begin{equation}
\mathcal{B}_p(z):=\sum_{m=0}^{\infty}A_{m}(p,1)z^m,
\end{equation}
convergent in some neighborhood of $0$.
For example
\begin{equation}
\mathcal{B}_2(z)=\frac{2}{1+\sqrt{1-4z}}.
\end{equation}
Lambert showed that
\begin{equation}
\mathcal{B}_p(z)^r=\sum_{m=0}^{\infty}A_{m}(p,r)z^m,
\end{equation}
see \cite{gkp}.
These generating functions also satisfy
\begin{equation}
\mathcal{B}_{p}(z)=1+z\mathcal{B}_{p}(z)^{p},
\end{equation}
which reflects the identity $A_{m}(p,p)=A_{m+1}(p,1)$, and
\begin{equation}
\mathcal{B}_p(z)=\mathcal{B}_{p-r}\big(z\mathcal{B}_p(z)^r\big).
\end{equation}

It was shown in \cite{mlotkowski2010} that if $p\ge1$ and $0\le r\le p$ then the sequence
$\left\{A_{m}(p,r)\right\}_{m=0}^{\infty}$ is positive definite,
i.e. is the moment sequence of a probability measure $\mu(p,r)$ on $\mathbb{R}$.
Moreover, $\mu(p,r)$ has compact support (and therefore is unique)
contained in the positive half-line $[0,\infty)$
(for example $\mu(p,0)=\delta_{0}$).
The proof involved methods from the free probability theory (see \cite{vdn,ns,biane}).
In particular, for $p\ge1$
\begin{equation}
\mu(p,1)=\mu(2,1)^{\boxtimes(p-1)},
\end{equation}
where $\boxtimes$ denotes the multiplicative free power,
and $\mu(2,1)$ is known as the \textit{Marchenko-Pastur}
(called also the \textit{free Poisson}) \textit{distribution}. It is given by
\begin{equation}
\mu(2,1)=\frac{1}{2\pi}\sqrt{\frac{4-x}{x}}\,dx\qquad\hbox{on [0,4],}
\end{equation}
and plays an important role in the theory of random matrices,
see \cite{wegmann1976,haageruplarsen2000,haagerup2005,anderson2010,alexeev2010,bbcc}.
It was proved in \cite{alexeev2010} that the measure $\mu(2,1)^{\boxtimes n}=\mu(n+1,1)$
is the limit of the distribution of squared singular values of the power $G^n$
of a random matrix $G$, when the size of the matrix $G$ goes to infinity.

In this paper we are going to prove positive definiteness
of $\left\{A_{m}(p,r)\right\}_{m=0}^{\infty}$ using more classical methods.
Namely, we show that if $p>1$, $0<r\le p$ and if $p$ is a rational number then
$\mu(p,r)$ is absolutely continuous and can be represented
as Mellin convolution of modified beta measures.
Next we provide a formula for the density $W_{p,r}(x)$
of $\mu(p,r)$ in terms of the Meijer function
and consequently, of the generalized hypergeometric functions
(cf. \cite{zpnc2011,pezy}, where $p$ was assumed to be an integer).
This allows us to draw graphs of these densities
and, in some particular cases, to express $W_{p,r}(x)$
as an elementary function.
It is worth to point out that for $r=1$ an alternative description of
the densities $W_{p,1}(x)$ has been recently given by Haagerup
and M\"{o}ller, see Corollary~3 in \cite{haagerupmoller2012}.

Finally let us also mention that
the measures $\mu(p,r)$ satisfy a peculiar relation:
\begin{equation}
\mu(p,r)\triangleright\mu(p+s,s)=\mu(p+s,r+s),
\end{equation}
for $p\ge1$, $0<r\le p$ and $s>0$,
see \cite{mlotkowski2010}, involving monotonic convolution ``$\triangleright$",
an associative, noncommutative operation on probability measures
on $\mathbb{R}$, introduced by Muraki \cite{muraki}.

\section{Preliminaries}

For probability measures $\mu_1$, $\mu_2$
on the positive half-line $[0,\infty)$ the \textit{Mellin convolution} is defined by
\begin{equation}
\left(\mu_1\circ\mu_2\right)(A):=\int_{0}^{\infty}\int_{0}^{\infty}\mathbf{1}_{A}(xy)d\mu_1(x)d\mu_{2}(y)
\end{equation}
for every Borel set $A\subseteq[0,\infty)$.
This is the distribution of product $X_1\cdot X_2$ of two independent nonnegative
random variables with $X_i\sim\mu_i$.
In particular,
if $c>0$ then $\mu\circ\delta_c$ is the \textit{dilation} of $\mu$:
\[
\left(\mu\circ\delta_c\right)(A)=\mathbf{D}_c\mu(A):=\mu\left(\frac{1}{c}A\right).
\]
Note that if $\mu$ has density $f(x)$
then $\mathbf{D}_c(\mu)$ has density $f(x/c)/c$.

If both the measures $\mu_1,\mu_2$ have all \textit{moments}
\[
s_m(\mu_i):=\int_{0}^{\infty}x^m\,d\mu_i(x)
\]
finite then so has $\mu_1\circ\mu_2$ and
\[
s_m\left(\mu_1\circ\mu_2\right)=s_m(\mu_1)\cdot s_m(\mu_2)
\]
for all $m$.

If $\mu_1,\mu_2$ are absolutely continuous, with densities $f_1,f_2$ respectively, then
so is $\mu_1\circ\mu_2$ and its density is given by the Mellin convolution:
\[
\left(f_1\circ f_2\right)(x):=\int_{0}^{\infty}f_{1}(x/y)f_{2}(y)\frac{dy}{y}.
\]

We will need the following \textit{modified beta distributions}:

\begin{lemma}\label{bprelembetadistr}
Let $u,v,l>0$. Then
\[
\left\{\frac{\Gamma(u+n/l)\Gamma(u+v)}{\Gamma(u+v+n/l)\Gamma(u)}\right\}_{n=0}^{\infty}
\]
is the moment sequence of the probability measure
\begin{equation}\label{bprebetadist}
\mathbf{b}(u+v,u,l):=
\frac{l}{\mathrm{B}(u,v)}x^{lu-1}\left(1-x^l\right)^{v-1}dx
\end{equation}
on $[0,1]$, where $\mathrm{B}$ is the Euler beta function.
\end{lemma}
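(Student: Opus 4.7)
The plan is to recognize $\mathbf{b}(u+v,u,l)$ as essentially a standard beta distribution transformed by the power map $x\mapsto x^{1/l}$, which reduces the computation of its moments to a single beta integral.

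First I would verify that $\mathbf{b}(u+v,u,l)$ is indeed a probability measure. Setting $t=x^l$ in the integral $\int_0^1 \frac{l}{\mathrm{B}(u,v)} x^{lu-1}(1-x^l)^{v-1}\,dx$ gives $dt=lx^{l-1}\,dx$, so the integrand transforms cleanly into $\frac{1}{\mathrm{B}(u,v)} t^{u-1}(1-t)^{v-1}\,dt$ on $[0,1]$, whose total mass is exactly $1$ by definition of the beta function. This verification also establishes the key substitution that drives the entire argument.

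Next, to compute the $n$-th moment $s_n = \int_0^1 x^n \, d\mathbf{b}(u+v,u,l)(x)$, I would perform the same change of variable $t=x^l$ so that $x^n=t^{n/l}$. The integral becomes
\[
s_n = \frac{1}{\mathrm{B}(u,v)}\int_0^1 t^{u+n/l-1}(1-t)^{v-1}\,dt = \frac{\mathrm{B}(u+n/l,v)}{\mathrm{B}(u,v)}.
\]
Expanding each beta function through the identity $\mathrm{B}(a,b)=\Gamma(a)\Gamma(b)/\Gamma(a+b)$, the common factor $\Gamma(v)$ cancels and one obtains
\[
s_n = \frac{\Gamma(u+n/l)\Gamma(u+v)}{\Gamma(u+v+n/l)\Gamma(u)},
\]
which is precisely the claimed $n$-th term of the moment sequence.

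There is no real obstacle here: once the substitution $t=x^l$ is made, the statement reduces to the well-known moment formula for the classical beta distribution. The only point requiring mild care is bookkeeping of exponents under the substitution, in particular that the factor $l$ in front of $x^{lu-1}$ is exactly what is needed to absorb the Jacobian $lx^{l-1}$ and produce $t^{u-1}\,dt$. The positivity of $u,v,l$ ensures convergence at both endpoints, so no separate integrability discussion is needed.
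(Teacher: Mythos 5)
Your proposal is correct and follows essentially the same route as the paper: the substitution $t=x^l$ reduces everything to the classical beta integral, and the identity $\mathrm{B}(a,b)=\Gamma(a)\Gamma(b)/\Gamma(a+b)$ converts the beta ratio into the stated Gamma ratio (the paper simply runs this computation in the reverse direction, starting from the Gamma quotient). Your explicit check of total mass is just the $n=0$ case of the same calculation, so there is no substantive difference.
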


\begin{proof}
Using the substitution $t=x^l$ we obtain:
\begin{align*}
\frac{\Gamma(u+n/l)\Gamma(u+v)}{\Gamma(u+v+n/l)\Gamma(u)}
&=\frac{\mathrm{B}(u+n/l,v)}{\mathrm{B}(u,v)}=
\frac{1}{\mathrm{B}(u,v)}\int_{0}^{1}t^{u+n/l-1}(1-t)^{v-1}dt\\
&=\frac{l}{\mathrm{B}(u,v)}\int_{0}^{1}x^{lu+n-1}\left(1-x^l\right)^{v-1}dx.
\end{align*}
\end{proof}

Note that if $X$ is a positive random variable whose distribution
has density $f(x)$ and if $l>0$ then the distribution of $X^{1/l}$
has density $lx^{l-1}f(x^l)$.
In particular, if $X$ has \textit{beta distribution} $\mathbf{b}(u+v,u,1)$
then $X^{1/l}$ has distribution $\mathbf{b}(u+v,u,l)$.

For $u,l>0$ we also define
\begin{equation}\label{bprebetadistb}
\mathbf{b}(u,u,l):=\delta_{1}.
\end{equation}

\section{Applying Mellin convolution}

From now on we assume that $p>1$ is a rational number,
say $p=k/l$, with $1\le l<k$, and that $0<r\le p$.
We will show, that then the sequence $A_m(p,r)$
is a moment sequence of a probability measure $\mu(p,r)$, which
can be represented as Mellin convolution of modified beta distributions.
In particular, $\mu(p,r)$ is absolutely continuous and we will denote
its density by $W_{p,r}$.
The case when $p$ is an integer was studied in \cite{pezy,zpnc2011}.

First we need to express the numbers $A_{m}(p,r)$ in a special form.

\begin{lemma}\label{convlemformula}
If $p=k/l$, where $k,l$ are integers, $1\le l<k$ and $0<r\le p$ then
\begin{equation}\label{convformula}
A_{m}(p,r)=\frac{r}{\sqrt{2\pi k l(k-l)}}\left(\frac{p}{p-1}\right)^{r}
\frac{\prod_{j=1}^{k}\Gamma(\beta_j+m/l)}{\prod_{j=1}^{k}\Gamma(\alpha_j+m/l)}
c(p)^{m},
\end{equation}
where $c(p)=p^p(p-1)^{1-p}$,
\begin{align}
\alpha_j&=\left\{\begin{array}{ll}
\cfrac{j}{l}&\mbox{if $1\le j\le l$,}\label{convalpha}\\
\cfrac{r+j-l}{k-l}&\mbox{if $l+1\le j\le k$,}
\end{array}\right.\\
\beta_j&=\frac{r+j-1}{k},\quad\qquad{1\le j\le k}.\label{convbeta}
\end{align}
\end{lemma}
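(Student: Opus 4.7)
The plan is to start from the elementary closed-form expression
\[
A_{m}(p,r)=\binom{mp+r}{m}\frac{r}{mp+r}=\frac{r\,\Gamma(mp+r)}{\Gamma(mp+r-m+1)\,\Gamma(m+1)},
\]
which follows directly from (\ref{intraneysequence}) and the definition of the generalized binomial coefficient, and then to convert the three Gamma functions into products by the Gauss--Legendre multiplication formula
\[
\Gamma(nz)=(2\pi)^{(1-n)/2}\,n^{nz-1/2}\prod_{j=0}^{n-1}\Gamma\!\left(z+\tfrac{j}{n}\right).
\]
Substituting $p=k/l$ turns the three arguments into $mp+r=k\bigl(m/l+r/k\bigr)$, $m+1=l\bigl(m/l+1/l\bigr)$ and $mp+r-m+1=(k-l)\bigl(m/l+(r+1)/(k-l)\bigr)$, so each is a positive integer multiple of a suitable $z$, and the multiplication formula applies cleanly.

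Applying it to $\Gamma(mp+r)$ with $n=k$ produces $k$ Gamma factors whose $j$-th term is $\Gamma\bigl(m/l+(r+j-1)/k\bigr)=\Gamma(\beta_j+m/l)$ for $j=1,\dots,k$. Applying it to $\Gamma(m+1)$ with $n=l$ gives $l$ factors $\Gamma(m/l+j/l)=\Gamma(\alpha_j+m/l)$ for $j=1,\dots,l$, matching the first branch of (\ref{convalpha}). Applying it to $\Gamma(mp+r-m+1)$ with $n=k-l$ produces $k-l$ factors $\Gamma\bigl(m/l+(r+j)/(k-l)\bigr)=\Gamma(\alpha_{l+j}+m/l)$ for $j=1,\dots,k-l$, matching the second branch. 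Taking the quotient yields exactly the ratio $\prod_{j=1}^{k}\Gamma(\beta_j+m/l)/\prod_{j=1}^{k}\Gamma(\alpha_j+m/l)$ from (\ref{convformula}).

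It then remains to simplify the constant and exponential part coming from the three multiplication-formula prefactors. The powers of $2\pi$ combine as $(2\pi)^{(1-k)/2-(1-l)/2-(1-(k-l))/2}=(2\pi)^{-1/2}$; the $\sqrt{n}$ factors assemble into $1/\sqrt{kl(k-l)}$ after pulling out a factor $k^r/(k-l)^r=(p/(p-1))^r$ using $k=pl$ and $k-l=(p-1)l$. The $m$-dependent base becomes
\[
\frac{k^{mk/l}}{l^{m}\,(k-l)^{m(k-l)/l}}=\left(\frac{(pl)^{p}}{l\cdot ((p-1)l)^{p-1}}\right)^{m}=\bigl(p^{p}(p-1)^{1-p}\bigr)^{m}=c(p)^{m},
\]
since the $l$-powers cancel. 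Collecting everything reproduces (\ref{convformula}) exactly.

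The whole argument is a single careful computation; the only real obstacle is the bookkeeping of the three prefactors from the multiplication formula and verifying that all $l$-powers in the $m$-dependent base cancel to leave precisely $c(p)^{m}$.
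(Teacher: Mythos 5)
Your proposal is correct and follows essentially the same route as the paper's own proof: rewrite $A_m(p,r)$ as $r\,\Gamma(mp+r)/\bigl(\Gamma(m+1)\Gamma(mp-m+r+1)\bigr)$, apply the Gauss multiplication formula with $n=k$, $n=l$ and $n=k-l$ respectively, and collect the $(2\pi)$-powers, the root factors, and the $m$-dependent base into the constant $\frac{r}{\sqrt{2\pi kl(k-l)}}\bigl(\frac{p}{p-1}\bigr)^r$ and $c(p)^m$. All the bookkeeping in your computation (the exponent $-1/2$ on $2\pi$, the extraction of $(p/(p-1))^r$, and the cancellation of the $l$-powers yielding $c(p)^m$) checks out.
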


\begin{proof}
First we write:
\begin{equation}\label{convgamma}
\binom{mp+r}{m}\frac{r}{mp+r}=\frac{r\Gamma(mp+r)}{\Gamma(m+1)\Gamma(mp-m+r+1)}.
\end{equation}
Now we apply the Gauss's multiplication formula:
\[
\Gamma(nz)=(2\pi)^{(1-n)/2}n^{nz-1/2}\Gamma(z)\Gamma\left(z+\frac{1}{n}\right)
\Gamma\left(z+\frac{2}{n}\right)\ldots\Gamma\left(z+\frac{n-1}{n}\right)
\]
to get:
\begin{align*}
\Gamma(m p+r)&=\Gamma\left(k\left(\frac{m}{l}+\frac{r}{k}\right)\right)
=(2\pi)^{(1-k)/2}k^{m k/l+r-1/2}\prod_{j=1}^{k}
\Gamma\left(\frac{m}{l}+\frac{r+j-1}{k}\right),\\
\Gamma(m+1)&=\Gamma\left(l\frac{m+1}{l}\right)=(2\pi)^{(1-l)/2}l^{m+1/2}\prod_{j=1}^{l}
\Gamma\left(\frac{m}{l}+\frac{j}{l}\right)
\end{align*}
and
\begin{align*}
\Gamma(m p-m+r+1)&=\Gamma\left((k-l)\left(\frac{m}{l}+\frac{r+1}{k-l}\right)\right)\\
&=(2\pi)^{(1-k+l)/2}(k-l)^{m(k-l)/l+r+1/2}\prod_{j=l+1}^{k}
\Gamma\left(\frac{m}{l}+\frac{r+j-l}{k-l}\right).
\end{align*}
It remains to apply them to (\ref{convgamma}).
\end{proof}

Now we need to modify enumeration of $\alpha$'s.

\vfill\eject

\begin{lemma}\label{convlemalpha}
For $1\le i\le l+1$ denote
\[
j_i:=\left\lfloor\frac{(i-1)k}{l}\right\rfloor+1,
\]
where $\lfloor\cdot\rfloor$ is the floor function,
so that
\[
1=j_1<j_2<\ldots<j_{l}<k<k+1=j_{l+1}.
\]
For $1\le j\le k$ define
\begin{equation}\label{convalphatilde}
\widetilde{\alpha}_j=\left\{\begin{array}{ll}
\cfrac{i}{l}&\mbox{ if $j=j_i$, $1\le i\le l$,}\\
\cfrac{r+j-i}{k-l}&\mbox{ if $j_i<j<j_{i+1}$.}
\end{array}\right.
\end{equation}
Then the sequence $\left\{\widetilde{\alpha}_j\right\}_{j=1}^{k}$
is a rearrangement of $\left\{{\alpha}_j\right\}_{j=1}^{k}$.
Moreover, if $0<r\le p=k/l$ then we have $\beta_j\le\widetilde{\alpha}_j$
for all $j\le k$.
\end{lemma}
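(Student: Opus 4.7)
The plan is to treat the two assertions separately. For the rearrangement claim, I would first observe that at the $l$ distinguished positions $j_1<j_2<\ldots<j_l$ the values $\widetilde{\alpha}_{j_i}=i/l$ are precisely (a permutation of) the original values $\alpha_1,\ldots,\alpha_l$ from the first case in \eqref{convalpha}. So everything reduces to showing that at the remaining $k-l$ positions $j\in\{1,\ldots,k\}\setminus\{j_1,\ldots,j_l\}$ the formula $\widetilde{\alpha}_j=(r+j-i)/(k-l)$ produces exactly the set $\bigl\{(r+s)/(k-l):1\le s\le k-l\bigr\}=\{\alpha_{l+1},\ldots,\alpha_k\}$, each once.

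To see this I would track what $s:=j-i$ does on each block of consecutive integers in $(j_i,j_{i+1})$. The block for index $i$ contributes $s$-values $j_i+1-i,\,j_i+2-i,\,\ldots,\,j_{i+1}-1-i$. Since the block for index $i$ ends at $j_{i+1}-1-i$ and the next block (index $i+1$) begins at $(j_{i+1}+1)-(i+1)=j_{i+1}-i$, consecutive blocks join up without gaps or overlaps. Because the first block starts at $1$ and the last ends at $j_{l+1}-1-l=k-l$, the union of all shifted blocks is exactly $\{1,2,\ldots,k-l\}$. This establishes the rearrangement.

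For the inequality $\beta_j\le\widetilde{\alpha}_j$ I would split into the same two cases. If $j=j_i$, the claim $l(r+j_i-1)\le ik$ follows from $j_i\le(i-1)k/l+1$ (by definition of $j_i$ and the floor function) together with $r\le k/l=p$. If $j_i<j<j_{i+1}$, I would clear denominators in $(r+j-1)/k\le (r+j-i)/(k-l)$; after simplification this reduces to $(i-1)k/l\le r+j-1$. Since $j$ is an integer strictly greater than $j_i=\lfloor(i-1)k/l\rfloor+1$, we have $j-1\ge\lfloor(i-1)k/l\rfloor+1\ge (i-1)k/l$, and the remaining $r>0$ gives the desired inequality.

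The argument is entirely elementary once the indexing is set up correctly. The main obstacle, and the only real work, is the combinatorial bookkeeping in the first step: verifying that removing the $l$ indices $j_1,\ldots,j_l$ from $\{1,\ldots,k\}$ and then shifting each surviving $j$ by its block index $i$ produces precisely the contiguous range $\{1,\ldots,k-l\}$. Once that bijection is in hand, both the rearrangement statement and the two case inequalities fall out of the inequality $\lfloor x\rfloor+1\ge x$ together with the hypothesis $r\le p$.
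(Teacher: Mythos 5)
Your proof is correct and follows essentially the same route as the paper: the inequality $\beta_j\le\widetilde{\alpha}_j$ is handled by the identical case split ($j=j_i$ versus $j_i<j<j_{i+1}$), with the same reductions to $l(r+j_i-1)\le ik$ and $k(i-1)\le l(r+j-1)$ via $\lfloor x\rfloor\le x$ and $\lfloor x\rfloor+1>x$ together with $r\le k/l$ and $r>0$. The only difference is that you spell out the block-shifting bijection behind the rearrangement claim, which the paper dismisses as ``easy to verify''; your bookkeeping there is sound, including the treatment of empty blocks.
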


\begin{proof}
It is easy to verify the first statement.

Assume that $j=j_i$ for some $i\le l$.
Then we have to show that
\[
\frac{r+j_{i}-1}{k}\le\frac{i}{l},
\]
which is equivalent to
\[
lr+l\left\lfloor\frac{k(i-1)}{l}\right\rfloor\le ki,
\]
and the latter is a consequence of the fact that $\lfloor x\rfloor\le x$
and the assumption $r\le p=k/l$.

Now assume that $j_{i}<j<j_{i+1}$. We ought to show that
\[
\frac{r+j-1}{k}\le\frac{r+j-i}{k-l},
\]
which is equivalent to
\[
lr+lj+k-l-ki\ge0.
\]
Using the inequality $\lfloor x\rfloor+1>x$ we obtain
\[
lj+k-l-ki\ge l(j_i+1)+k-l-ki=lj_{i}+k-ki>k(i-1)+k-ki=0,
\]
which completes the proof, as $r>0$.
\end{proof}

Now we are ready to prove the main theorem of this section.

\begin{theorem}
Suppose that $p=k/l$, where $k,l$ are integers such that
$1\le l<k$, and that $r$ is a real number, $0<r\le p$.
Then there exists a unique probability measure $\mu(p,r)$
such that (\ref{intraneysequence}) is its moment sequence. Moreover
$\mu(p,r)$ can be represented as the following Mellin
convolution:
\[
\mu(p,r)=\mathbf{b}(\widetilde{\alpha}_1,\beta_1,l)\circ\ldots\circ
\mathbf{b}(\widetilde{\alpha}_k,\beta_k,l)\circ\delta_{c(p)},
\]
where \[c(p):=\frac{p^p}{(p-1)^{p-1}}.\]
Consequently, $\mu(p,r)$ is absolutely continuous
and its support is $[0,c(p)]$.
\end{theorem}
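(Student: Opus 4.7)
The plan is to define $\mu(p,r)$ as the Mellin convolution appearing on the right-hand side of the asserted identity, and then to verify by a direct moment computation that its $m$-th moment equals $A_m(p,r)$; uniqueness, compact support, and absolute continuity will then follow from the construction.

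First I would check that each factor $\mathbf{b}(\widetilde{\alpha}_j,\beta_j,l)$ is a well-defined probability measure. Lemma \ref{convlemalpha} guarantees $\widetilde{\alpha}_j\ge\beta_j$. When the inequality is strict, Lemma \ref{bprelembetadistr} applies with $u:=\beta_j$ and $v:=\widetilde{\alpha}_j-\beta_j>0$, yielding a measure supported on $[0,1]$ whose $n$-th moment is
\[
\frac{\Gamma(\beta_j+n/l)\,\Gamma(\widetilde{\alpha}_j)}{\Gamma(\widetilde{\alpha}_j+n/l)\,\Gamma(\beta_j)}.
\]
When $\widetilde{\alpha}_j=\beta_j$ the convention \eqref{bprebetadistb} sets the factor equal to $\delta_1$, whose $n$-th moment is $1$ and still matches the above ratio. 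Since Mellin convolution multiplies moments and $\delta_{c(p)}$ rescales the $m$-th moment by $c(p)^m$, the candidate measure has $m$-th moment
\[
c(p)^m\prod_{j=1}^{k}\frac{\Gamma(\beta_j+m/l)\,\Gamma(\widetilde{\alpha}_j)}{\Gamma(\widetilde{\alpha}_j+m/l)\,\Gamma(\beta_j)}.
\]

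Next I would compare this with Lemma \ref{convlemformula}. Because $\{\widetilde{\alpha}_j\}$ is merely a rearrangement of $\{\alpha_j\}$, the products $\prod_j\Gamma(\widetilde{\alpha}_j+m/l)$ and $\prod_j\Gamma(\alpha_j+m/l)$ coincide, and similarly $\prod_j\Gamma(\widetilde{\alpha}_j)=\prod_j\Gamma(\alpha_j)$. Matching the two moment formulas thus reduces to the scalar identity
\[
\prod_{j=1}^{k}\frac{\Gamma(\widetilde{\alpha}_j)}{\Gamma(\beta_j)}=\frac{r}{\sqrt{2\pi kl(k-l)}}\left(\frac{p}{p-1}\right)^{r},
\]
which is automatic upon specializing Lemma \ref{convlemformula} to $m=0$ and invoking $A_0(p,r)=1$. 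Hence the candidate measure has $\{A_m(p,r)\}$ as its moment sequence.

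Finally, every beta factor lies in $[0,1]$ and the dilation pushes everything into $[0,c(p)]$, so the constructed measure has compact support; by Hausdorff's theorem it is therefore the \emph{unique} probability measure with these moments. For absolute continuity it suffices to exhibit one genuinely non-atomic beta factor: the sharpened form of the argument in the proof of Lemma \ref{convlemalpha} gives $\beta_j<\widetilde{\alpha}_j$ for any $j$ with $j_i<j<j_{i+1}$, and such a $j$ exists because $\sum_{i=1}^{l}(j_{i+1}-j_i)=k>l$, forcing at least one gap of size $\ge 2$. The main obstacle here is essentially cosmetic — recognizing that the opaque prefactor produced by Gauss's multiplication formula in Lemma \ref{convlemformula} is exactly the missing gamma ratio — but since this identification is free from $A_0=1$, no further calculation is needed, and the theorem is really a reassembly of Lemmas \ref{convlemformula} and \ref{convlemalpha} via the multiplicativity of moments under Mellin convolution.
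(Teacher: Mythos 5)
Your proposal is correct and follows essentially the same route as the paper: combine Lemma~\ref{convlemformula} with the rearrangement and inequality $\beta_j\le\widetilde{\alpha}_j$ from Lemma~\ref{convlemalpha}, recognize each Gamma ratio as a moment sequence of $\mathbf{b}(\widetilde{\alpha}_j,\beta_j,l)$ via Lemma~\ref{bprelembetadistr}, use multiplicativity of moments under Mellin convolution, and fix the constant by evaluating at $m=0$ where $A_0(p,r)=1$. Your write-up is in fact more complete than the paper's, which leaves implicit the $\widetilde{\alpha}_j=\beta_j$ degenerate factors, the uniqueness-from-compact-support step, and the existence of a strictly non-atomic factor needed for absolute continuity.
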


Note that the representation of densities in the form of Mellin convolution
of modified beta distributions was used in different context in \cite{gorska2012},
see its Appendix~A.

\textbf{Example.} For the Marchenko-Pastur measure we get
the following decomposition:
\begin{equation}
\mu(2,1)=\mathbf{b}(1,1/2,1)\circ\mathbf{b}(2,1,1)\circ\delta_{4},
\end{equation}
where $\mathbf{b}(1,1/2,1)$ has density $1/(\pi\sqrt{x-x^2})$ on $[0,1]$,
the arcsine distribution with the moment sequence $\binom{2m}{m}4^{-m}$,
and $\mathbf{b}(2,1,1)$ is the Lebesgue measure on $[0,1]$
with the moment sequence $1/(m+1)$.

\begin{proof}
In view of Lemma~\ref{convlemformula} and Lemma~\ref{convlemalpha}
we can write
\[
A_m(p,r)=D
\prod_{j=1}^{k}\frac{\Gamma(\beta_j+m/l)\Gamma(\widetilde{\alpha}_j)}
{\Gamma(\widetilde{\alpha}_j+m/l)\Gamma(\beta_j)}\cdot c(p)^m
\]
for some constant $D$. Taking $m=0$ we see that $D=1$.
\end{proof}

Note that a part of the theorem illustrates a result of Kargin \cite{kargin2008},
who proved that if $\mu$ is a compactly supported probability measure
on $[0,\infty)$, with expectation $1$ and variance $V$,
and if $L_n$ denotes the supremum of the support
of the multiplicative free convolution power $\mu^{\boxtimes n}$,
then
\begin{equation}\label{convkargin}
\lim_{n\to\infty}\frac{L_n}{n}=eV,
\end{equation}
where $e=2.71\ldots$ is the Euler's number.
The Marchenko-Pastur measure $\mu(2,1)$ has expectation and variance equal to $1$
and $\mu(2,1)^{\boxtimes n}=\mu(n+1,1)$, so in this case $L_n=(n+1)^{n+1}/n^n$
(this was also proved in \cite{wegmann1976} and \cite{haagerup2005})
and (\ref{convkargin}) holds.

The density function for $\mu(p,r)$ will be denoted by $W_{p,r}(x)$.
Since $A_{m}(p,p)=A_{m+1}(p,1)$, we have
\begin{equation}\label{convpp}
W_{p,p}(x)=x\cdot W_{p,1}(x),
\end{equation}
for example
\begin{equation}\label{convsemicircle}
W_{2,2}(x)=\frac{1}{2\pi}\sqrt{x(4-x)}\qquad\hbox{on $[0,4]$,}
\end{equation}
which is the famous semicircle Wigner distribution with radius $2$, centered at $x=2$.

Now we can reprove the main result of \cite{mlotkowski2010}.

\begin{theorem}
Suppose that $p,r$ are real numbers such that $p\ge1$ and $0\le r\le p$.
Then there exists a unique probability measure $\mu(p,r)$,
with support contained in $[0,c(p)]$, such that
$\left\{A_{m}(p,r)\right\}_{m=0}^{\infty}$ is its moment sequence.
\end{theorem}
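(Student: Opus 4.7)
The plan is to extend the earlier Mellin-convolution theorem, which handles rational $p > 1$ with $0 < r \le p$, to all real $p \ge 1$ and $0 \le r \le p$ via a weak-convergence (moment-method) argument, after disposing of degenerate boundary cases.

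First I would dispose of the trivial boundary cases. If $r = 0$, then $A_{0}(p,0)=1$ and $A_{m}(p,0)=0$ for $m\ge1$, so $\mu(p,0):=\delta_{0}$ does the job. If $p=r=1$, then $A_{m}(1,1)=1$ for every $m$, so $\mu(1,1):=\delta_{1}$. The remaining range to treat is $p\ge1$, $0<r\le p$, with either $p=1$ and $0<r<1$, or $p>1$ irrational (the rational case $p>1$ being the previous theorem).

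For such $(p,r)$ I would pick a sequence of rationals $p_{n}>1$ with $p_{n}\to p$ and reals $r_{n}\to r$ with $0<r_{n}\le p_{n}$ (for instance $r_{n}:=\min(r,p_{n})$, which is eventually equal to $r$). The previous theorem furnishes probability measures $\mu(p_{n},r_{n})$ supported in $[0,c(p_{n})]$ with moments $s_{m}(\mu(p_{n},r_{n}))=A_{m}(p_{n},r_{n})$. Continuity of $c$ on $[1,\infty)$ (with $c(p)\to1$ as $p\to1^{+}$ by L'Hospital-type analysis of $(p-1)^{p-1}$) lets me enclose every support in one compact interval $K$, so the family is tight; Prokhorov/Helly then extracts a weakly convergent subsequence $\mu(p_{n_{k}},r_{n_{k}})\Rightarrow\mu$. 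For each fixed $m$, $A_{m}(p,r)$ is polynomial in $(p,r)$, and $x\mapsto x^{m}$ is bounded and continuous on $K$, so passing to the limit in each moment gives $s_{m}(\mu)=A_{m}(p,r)$.

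To verify $\operatorname{supp}(\mu)\subseteq[0,c(p)]$, fix $\delta>0$; since $c(p_{n_{k}})\to c(p)$, we have $\mu(p_{n_{k}},r_{n_{k}})\bigl((c(p)+\delta,\infty)\bigr)=0$ for all large $k$, and the Portmanteau theorem applied to the open set $(c(p)+\delta,\infty)$ yields $\mu\bigl((c(p)+\delta,\infty)\bigr)\le\liminf_{k}\mu(p_{n_{k}},r_{n_{k}})\bigl((c(p)+\delta,\infty)\bigr)=0$; sending $\delta\searrow0$ completes the inclusion. Uniqueness is automatic: any probability measure with compact support is determined by its moments via the Hausdorff moment problem. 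The one point needing care is the selection of $(p_{n},r_{n})$ so as to remain in the admissible region of the previous theorem while forcing both moments and supports to converge to the target, but this is routine bookkeeping rather than a genuine analytic obstacle.
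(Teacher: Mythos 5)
Your proof is correct, and it follows the same guiding idea as the paper --- reduce to the already-established rational case and take limits --- but the technical machinery is genuinely different. The paper's entire proof is one sentence: ``It follows from the fact that the class of positive definite sequences is closed under pointwise limits.'' That is, the paper works at the level of moment sequences: since $A_m(p_n,r_n)\to A_m(p,r)$ for each fixed $m$ (polynomiality in $(p,r)$), positive definiteness of the approximating sequences passes to the limit, and existence of the measure follows from the solution of the moment problem; the compact-support claim (hence uniqueness) is left implicit, presumably via a growth bound on the moments. You instead work at the level of measures: tightness from the uniform bound $c(p_n)\le M$, Prokhorov/Helly extraction of a weak limit, convergence of moments using that all measures live in one compact set, and an explicit Portmanteau argument for $\operatorname{supp}(\mu)\subseteq[0,c(p)]$. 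Your route is longer but buys something the paper's one-liner does not: a self-contained verification of the support statement, which in the paper's sequence-level argument would require separately checking that $\limsup_m A_m(p,r)^{1/m}\le c(p)$. Conversely, the paper's argument is more economical and avoids any appeal to weak-convergence theory, needing only that positivity of Hankel matrices survives pointwise limits. Both are complete proofs; also note your boundary cases ($r=0$ giving $\delta_0$, $p=r=1$ giving $\delta_1$) are consistent with the paper's remark that $\mu(p,0)=\delta_0$.
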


\begin{proof}
It follows from the fact that the class of positive definite sequence
is closed under pointwise limits.
\end{proof}

\section{Applying Meijer $G$-function}

The aim of this section is to describe the density function
$W_{p,r}(x)$ of $\mu(p,r)$ in terms of the Meijer $G$-function (see \cite{olver} for example)
and consequently, as a linear combination of generalized hypergeometric
functions. We will see that in some particular cases $W_{p,r}$
can be represented as an elementary function.

\begin{theorem}
Let $p=k/l>1$, where $k,l$ are integers such that
$1\le l<k$, and let $r$ be a positive real number, $r\le p$.
Then the density $W_{p,r}$ of the probability measure $\mu(p,r)$
can be expressed as
\begin{equation}\label{meijerth}
W_{p,r}(x)=\frac{rp^{r}}{x(p-1)^{r+1/2}\sqrt{2k\pi}}\,
G^{k,0}_{k,k}\!\left(\left.\!\!
\begin{array}{ccc}
\alpha_1,\!\!&\!\!\ldots,\!\!&\!\!\alpha_k\\
\beta_1,\!\!&\!\!\ldots,\!\!&\!\!\beta_k
\end{array}
\!\right|\frac{x^{l}}{c(p)^l}\right),
\end{equation}
$x\in(0,c(p))$, where $c(p)=p^p(p-1)^{1-p}$
and the parameters $\alpha_j,\beta_j$ are given by
(\ref{convalpha}) and (\ref{convbeta}).
\end{theorem}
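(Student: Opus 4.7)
\medskip

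The plan is to exhibit the right-hand side of (\ref{meijerth}) as a function whose $m$-th moment coincides with $A_{m}(p,r)$ for every nonnegative integer $m$, then invoke uniqueness of the measure $\mu(p,r)$ (already established in the previous section, since $\mu(p,r)$ has compact support). Since the $G^{k,0}_{k,k}$-function with the data $(\alpha_j;\beta_j)$ vanishes outside $(0,1)$, the candidate density is automatically supported on $(0,c(p))$, which matches the support of $\mu(p,r)$; this will justify integrating over $(0,c(p))$ and identifying the result with an integral over $(0,\infty)$.

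The key tool will be the Mellin transform representation of the Meijer $G$-function, namely
\begin{equation*}
\int_{0}^{\infty}y^{s-1}\,G^{k,0}_{k,k}\!\left(\left.\!
\begin{array}{c}\alpha_1,\ldots,\alpha_k\\ \beta_1,\ldots,\beta_k\end{array}\!
\right|y\right)dy=\frac{\prod_{j=1}^{k}\Gamma(\beta_j+s)}{\prod_{j=1}^{k}\Gamma(\alpha_j+s)},
\end{equation*}
valid in the appropriate strip. I would denote the right-hand side of (\ref{meijerth}) by $F(x)$, compute $\int_{0}^{c(p)}x^{m}F(x)\,dx$, and perform the substitution $y=x^{l}/c(p)^{l}$, so that $x=c(p)\,y^{1/l}$ and $dx=(c(p)/l)\,y^{1/l-1}\,dy$. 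The factor $1/x$ appearing in $F$ combines with the Jacobian to produce $y^{m/l-1}\,dy/l$, and the remaining $x^{m}$ contributes $c(p)^{m}\,y^{m/l}$, so the integral reduces, via the displayed Mellin formula applied with $s=m/l$, to
\begin{equation*}
\frac{rp^{r}}{(p-1)^{r+1/2}\sqrt{2k\pi}}\cdot\frac{c(p)^{m}}{l}\cdot
\frac{\prod_{j=1}^{k}\Gamma(\beta_j+m/l)}{\prod_{j=1}^{k}\Gamma(\alpha_j+m/l)}.
\end{equation*}

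Comparison with Lemma \ref{convlemformula} then requires the constant identity
\begin{equation*}
\frac{p^{r}}{l(p-1)^{r+1/2}\sqrt{2k\pi}}=\frac{1}{\sqrt{2\pi kl(k-l)}}\left(\frac{p}{p-1}\right)^{r},
\end{equation*}
which, after cancelling the common factor $(p/(p-1))^{r}/\sqrt{2k\pi}$, reduces to the elementary relation $l^{2}(p-1)=l(k-l)$, i.e. $l p=k$, true by hypothesis. This confirms that the $m$-th moment of $F$ equals $A_{m}(p,r)$ for each $m\ge 0$, and uniqueness of $\mu(p,r)$ on the compact set $[0,c(p)]$ forces $F=W_{p,r}$ almost everywhere.

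The only subtle point, and hence the principal obstacle to watch, is that the rearranged parameters $\widetilde{\alpha}_j$ of Lemma \ref{convlemalpha} (used in the Mellin convolution representation of $\mu(p,r)$) are replaced in (\ref{meijerth}) by the original $\alpha_j$. However the Mellin transform of the Meijer $G$-function is invariant under simultaneous permutations of the $\alpha$-parameters (and of the $\beta$-parameters), so the numerical value of the moments is unaffected by the choice $\alpha_j$ versus $\widetilde{\alpha}_j$; in particular the inequality $\beta_j\le\widetilde{\alpha}_j$ established in Lemma \ref{convlemalpha} only serves to guarantee the compact support $(0,1)$ of the $G^{k,0}_{k,k}$-function (equivalently, that $\mu(p,r)$ is a probability measure on a bounded interval), whereas the Mellin-based moment identification above treats $\alpha_j$ and $\widetilde{\alpha}_j$ symmetrically. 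Aside from this bookkeeping, the proof is a direct computation.
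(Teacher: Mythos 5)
Your proof is correct, but it runs in the opposite direction from the paper's. The paper \emph{derives} the formula: it observes that $\phi_{p,r}(\sigma)=r\Gamma(\sigma p-p+r)/\big(\Gamma(\sigma)\Gamma(\sigma p-\sigma-p+r+2)\big)$ interpolates the moments, so it is the Mellin transform of $W_{p,r}$; it then applies the \emph{inverse} Mellin transform, substitutes $m=\sigma-1$ into Lemma \ref{convlemformula} to turn the integrand into the Mellin--Barnes integrand defining $G^{k,0}_{k,k}$, and finishes with the shift identity $z^{\mu}G(\mathbf{a};\mathbf{b}\,|\,z)=G(\mathbf{a}+\mu;\mathbf{b}+\mu\,|\,z)$ (Olver 16.19.2) to absorb the $-1/l$ offsets in the parameters. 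You instead \emph{verify} the formula: starting from the right-hand side, you compute its integer moments via the forward Mellin-transform formula for $G^{k,0}_{k,k}$, match the resulting Gamma products and the constant against Lemma \ref{convlemformula} (your constant check $l^2(p-1)=l(k-l)\iff lp=k$ is exactly right), and conclude by uniqueness of a compactly supported (signed) density with prescribed moments. Your route avoids the inverse-Mellin contour argument and the shift identity, but needs two extra standard inputs the paper does not: the vanishing of $G^{k,0}_{k,k}$ for $|z|>1$ (so that the support is $[0,c(p)]$ and moments determine the function via Weierstrass approximation), and integrability of the $G$-function near $z=1$, which you wave at with ``the appropriate strip''. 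One small misattribution: the compact support of $G^{k,0}_{k,k}$ inside the unit interval is automatic for any $G^{q,0}_{q,q}$ and does not depend on the inequality $\beta_j\le\widetilde{\alpha}_j$ of Lemma \ref{convlemalpha}; that inequality is what makes each Mellin factor $\mathbf{b}(\widetilde{\alpha}_j,\beta_j,l)$ a genuine beta distribution, i.e.\ it guarantees nonnegativity of $W_{p,r}$ — but since your moment-matching argument needs neither nonnegativity nor that inequality, this slip is harmless.
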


\begin{proof}
 Define
\[
\phi_{p,r}(\sigma)=\frac{r\Gamma(\sigma p-p+r)}{\Gamma(\sigma)\Gamma(\sigma p-\sigma-p+r+2)}.
\]
If $m$ is a natural number then
\[
\phi_{p,r}(m+1)=\binom{mp+r}{m}\frac{r}{mp+r}
\]
so $\phi_{p,r}$ is the Mellin transform of the density function $W_{p,r}$
of $\mu(p,r)$:
\[
\phi_{p,r}(\sigma)=\int_{0}^{\infty} x^{\sigma-1} W_{p,r}(x)\,dx.
\]

In order to reconstruct $W_{p,r}$ we apply the inverse Mellin transform:
\[
W_{p,r}(x)=\frac{1}{2\pi\mathrm{i}}\int_{C-\mathrm{i}\infty}^{C+\mathrm{i}\infty}
x^{-\sigma}\phi_{p,r}(\sigma)\,d\sigma,
\]
see \cite{andrews,olver,polyanin} for details.
Putting $m=\sigma-1$ in (\ref{convformula}) we get
\[
\phi_{p,r}(\sigma)=\frac{r(p-1)^{p-1-r}}{p^{p-r}\sqrt{2\pi k l(k-l)}}
\frac{\prod_{j=1}^{k}\Gamma(\beta_j-1/l+\sigma/l)}{\prod_{j=1}^{k}\Gamma(\alpha_j-1/l+\sigma/l)}
c(p)^{\sigma}.
\]
Writing the right hand side as $\Phi(\sigma/l)c(p)^{\sigma}$,
using the substitution $\sigma=lu$ and the definition of the Meijer $G$-function
(see \cite{olver} for example) we obtain
\[
W_{p,r}(x)=\frac{1}{2\pi \mathrm{i}}\int_{C-\mathrm{i}\infty}^{C+\mathrm{i}\infty}
\Phi(\sigma/l)c(p)^{\sigma}x^{-\sigma}d\sigma
=\frac{l}{2\pi \mathrm{i}}\int_{C-\mathrm{i}\infty}^{C+\mathrm{i}\infty}
\Phi(u)\left({x^l}/{c(p)^{l}}\right)^{-u}du
\]
\[
=\frac{r(p-1)^{p-r-3/2}}{p^{p-r}\sqrt{2\pi k }}
G^{k,0}_{k,k}\!\left(\left.\!\!
\begin{array}{ccc}
\alpha_1^{-},\!\!&\!\!\ldots,\!\!&\!\!\alpha_k^{-}\\
\beta_1^{-},\!\!&\!\!\ldots,\!\!&\!\!\beta_k^{-}
\end{array}
\!\right|z\right),
\]
where $z=x^l/c(p)^l$, $\alpha_j^{-}=\alpha_j-1/l$, $\beta_j^{-}=\beta_j-1/l$.
Finally we use formula (16.19.2) in \cite{olver} and obtain
\begin{equation}
W_{p,r}(x)=\frac{r(p-1)^{p-r-3/2}}{z^{1/l}p^{p-r}\sqrt{2\pi k }}
G^{k,0}_{k,k}\!\left(\left.\!\!
\begin{array}{ccc}
\alpha_1,\!\!&\!\!\ldots,\!\!&\!\!\alpha_k\\
\beta_1,\!\!&\!\!\ldots,\!\!&\!\!\beta_k
\end{array}
\!\right|z\right),
\end{equation}
which is equivalent to (\ref{meijerth}).
\end{proof}

Now applying Slater's theorem (see (16.17.2) in \cite{olver}) we can represent $W_{p,r}$ as a linear
combination of hypergeometric functions.

\begin{theorem}\label{thmslater}
For for $p=k/l$, with $1\le l<k$, $r>0$ and $x\in(0,c(p))$ we have
\begin{equation}\label{meislater}
W_{p,r}(x)=\gamma(k,l,r)\sum_{h=1}^{k}c(h,k,l,r)\,
{}_{k}F_{k-1}\!\left(\left.
\begin{array}{c}
\!\!\!\mathbf{a}(h,k,l,r)\\
\!\!\!\mathbf{b}(h,k,l,r)\end{array}\!
\right|z\right)
z^{(r+h-1)/k-1/l}
\end{equation}
where $z=x^l/c(p)^l$,
\begin{align}
\gamma(k,l,r)&=\frac{r(p-1)^{p-r-3/2}}{p^{p-r}\sqrt{2\pi k }},\\
c(h,k,l,r)&=\frac{\prod_{j=1}^{h-1}\Gamma\left(\frac{j-h}{k}\right)
\prod_{j=h+1}^{k}\Gamma\left(\frac{j-h}{k}\right)}
{\prod_{j=1}^{l}\Gamma\left(\frac{j}{l}-\frac{r+h-1}{k}\right)
\prod_{j=l+1}^{k}\Gamma\left(\frac{r+j-l}{k-l}-\frac{r+h-1}{k}\right)},
\end{align}
and the parameter vectors of the hypergeometric functions are
\begin{align}
\mathbf{a}(h,k,l,r)&=
\left(\left\{\frac{r+h-1}{k}-\frac{j-l}{l}\right\}_{j=1}^{l},
\left\{\frac{r+h-1}{k}-\frac{r+j-k}{k-l}\right\}_{j=l+1}^{k}\right),\\
\mathbf{b}(h,k,l,r)&=
\left(\left\{\frac{k+h-j}{k}\right\}_{j=1}^{h-1},
\left\{\frac{k+h-j}{k}\right\}_{j=h+1}^{k}\right).
\end{align}
\end{theorem}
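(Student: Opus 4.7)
The proof is a direct application of Slater's expansion (formula (16.17.2) in \cite{olver}) to the Meijer $G$-function appearing on the right hand side of (\ref{meijerth}). First, I would invoke that expansion, which writes a $G^{k,0}_{k,k}$-function whose lower parameters $\beta_1,\ldots,\beta_k$ pairwise differ by non-integers as a sum of $k$ terms, indexed by $h=1,\ldots,k$, each of the form
\[
\frac{\prod_{j\ne h}\Gamma(\beta_j-\beta_h)}{\prod_{j=1}^{k}\Gamma(\alpha_j-\beta_h)}\,z^{\beta_h}\,{}_{k}F_{k-1}\!\left(\begin{array}{c}(1+\beta_h-\alpha_j)_{j=1}^{k}\\ (1+\beta_h-\beta_j)_{j\ne h}\end{array}\bigg|\,z\right).
\]
Since $\beta_j=(r+j-1)/k$ from (\ref{convbeta}), the differences $\beta_j-\beta_i=(j-i)/k$ are non-integer for $i\ne j$, so the hypothesis of Slater's theorem is satisfied.

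Next, I would identify the generic Slater data term-by-term with the quantities stated in (\ref{meislater}). The numerator $\prod_{j\ne h}\Gamma(\beta_j-\beta_h)=\prod_{j\ne h}\Gamma((j-h)/k)$ reproduces, after the split into $j<h$ and $j>h$, the numerator of $c(h,k,l,r)$. Splitting $\prod_{j=1}^{k}\Gamma(\alpha_j-\beta_h)$ according to (\ref{convalpha}) into contributions from $1\le j\le l$ and $l+1\le j\le k$ yields the two products in the denominator of $c(h,k,l,r)$. For the hypergeometric parameters a short rearrangement gives
$1+\beta_h-\alpha_j=(r+h-1)/k-(j-l)/l$ when $1\le j\le l$ and
$1+\beta_h-\alpha_j=(r+h-1)/k-(r+j-k)/(k-l)$ when $l+1\le j\le k$,
which is precisely $\mathbf{a}(h,k,l,r)$; similarly $1+\beta_h-\beta_j=(k+h-j)/k$ yields $\mathbf{b}(h,k,l,r)$.

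Finally, I would absorb the prefactor $1/x$ in (\ref{meijerth}) into $z^{\beta_h}$ via $x=c(p)\,z^{1/l}$, which turns $z^{\beta_h}/x$ into $c(p)^{-1}z^{(r+h-1)/k-1/l}$ and so produces the powers of $z$ displayed in (\ref{meislater}). Substituting $c(p)=p^{p}(p-1)^{1-p}$ into the remaining constant $rp^{r}/[c(p)(p-1)^{r+1/2}\sqrt{2k\pi}]$ simplifies it to $r(p-1)^{p-r-3/2}/[p^{p-r}\sqrt{2k\pi}]=\gamma(k,l,r)$.

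Because Slater's theorem is a classical closed-form identity, there is no genuine analytic obstacle. The only real work is the algebraic bookkeeping that identifies the abstract Slater parameters with the explicit expressions $c(h,k,l,r)$, $\mathbf{a}(h,k,l,r)$ and $\mathbf{b}(h,k,l,r)$, and verifies the overall constant $\gamma(k,l,r)$; this step is slightly tedious but entirely routine.
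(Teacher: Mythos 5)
Your proposal is correct and follows exactly the route the paper takes (the paper in fact gives no written proof beyond the remark that Theorem~\ref{thmslater} follows by ``applying Slater's theorem (16.17.2) in \cite{olver}'' to the representation (\ref{meijerth})). Your parameter identifications, the non-integer-difference hypothesis check via $\beta_j-\beta_h=(j-h)/k$, and the simplification of the constant to $\gamma(k,l,r)$ all check out.
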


The most tractable case is $p=2$.

\medskip
\begin{corollary}\label{cordwa}
For $p=2$, $0<r\le 2$, the density function is
\begin{align}
W_{2,r}(x)
&=\frac{\sin\left(r\cdot\arccos\sqrt{x/4}\right)}{\pi x^{1-r/2}},\\
\intertext{$x\in(0,4)$. In particular for $r=1/2$ we have}
W_{2,1/2}(x)&=\frac{\sqrt{2-\sqrt{x}}}{2\pi x^{3/4}},\\
\intertext{and for $r=3/2$}
W_{2,3/2}(x)&=\frac{\left(\sqrt{x}+1\right)\sqrt{2-\sqrt{x}}}{2\pi x^{1/4}}.
\end{align}
\end{corollary}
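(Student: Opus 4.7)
The plan is to specialize Theorem~\ref{thmslater} to $p=2$, i.e.\ $k=2$, $l=1$, and explicitly evaluate the two hypergeometric functions that appear in the Slater expansion.

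First I would list all ingredients. With $k=2$, $l=1$ the exponent indices are $j_1=1,j_2=2$, giving $\widetilde\alpha_1=1$, $\widetilde\alpha_2=r+1$, $\beta_1=r/2$, $\beta_2=(r+1)/2$, and $c(p)=4$, so that $z=x/4\in(0,1)$. The sum in \eqref{meislater} has only two terms, $h=1$ and $h=2$. A direct substitution yields
\[
\mathbf{a}(1,2,1,r)=\left(\tfrac{r}{2},-\tfrac{r}{2}\right),\ \mathbf{b}(1,2,1,r)=\left(\tfrac{1}{2}\right),\qquad
\mathbf{a}(2,2,1,r)=\left(\tfrac{r+1}{2},\tfrac{1-r}{2}\right),\ \mathbf{b}(2,2,1,r)=\left(\tfrac{3}{2}\right),
\]
and short computations of the gamma products, combined with the reflection formula $\Gamma(s)\Gamma(1-s)=\pi/\sin(\pi s)$, give
\[
\gamma(2,1,r)=\frac{r}{2^{3-r}\sqrt\pi},\qquad c(1,2,1,r)=\frac{2\sin(\pi r/2)}{r\sqrt\pi},\qquad c(2,2,1,r)=-\frac{2\cos(\pi r/2)}{\sqrt\pi}.
\]

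Next I would recognize the two Gaussian hypergeometric functions via the classical identities
\[
{}_2F_1\!\left(\tfrac{a}{2},-\tfrac{a}{2};\tfrac12;\sin^2\theta\right)=\cos(a\theta),\qquad
{}_2F_1\!\left(\tfrac{1+a}{2},\tfrac{1-a}{2};\tfrac32;\sin^2\theta\right)=\frac{\sin(a\theta)}{a\sin\theta}.
\]
Setting $\theta=\arcsin(\sqrt{x}/2)$ (so $\sin^2\theta=x/4=z$ and $\sin\theta=(x/4)^{1/2}$), the exponent $z^{(r+h-1)/2-1}$ simplifies in both terms to a common factor $(x/4)^{(r-2)/2}$ after pulling out $1/\sin\theta$ in the $h=2$ contribution. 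Plugging everything into \eqref{meislater} and collecting constants, the bracket reduces to
\[
\sin(\pi r/2)\cos(r\theta)-\cos(\pi r/2)\sin(r\theta)=\sin\!\big(r(\pi/2-\theta)\big)=\sin\!\big(r\arccos\sqrt{x/4}\big),
\]
and the powers of $2$ cancel so as to leave exactly $W_{2,r}(x)=\sin(r\arccos\sqrt{x/4})/(\pi x^{1-r/2})$.

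For the two closed-form specializations I would set $\beta=\arccos(\sqrt{x}/2)$, so that $\cos\beta=\sqrt{x}/2$ and $\sin\beta=\sqrt{1-x/4}$, and apply half-angle identities. For $r=1/2$ this is immediate: $\sin(\beta/2)=\sqrt{(1-\cos\beta)/2}=\tfrac12\sqrt{2-\sqrt x}$, giving the stated formula. For $r=3/2$ I would use $\sin(3\beta/2)=\sin\beta\cos(\beta/2)+\cos\beta\sin(\beta/2)$ together with $\cos(\beta/2)=\tfrac12\sqrt{2+\sqrt x}$ and $\sin\beta=\tfrac12\sqrt{(2-\sqrt x)(2+\sqrt x)}$, factor out $\sqrt{2-\sqrt x}$, and simplify $(2+\sqrt x+\sqrt x)/4=(1+\sqrt x)/2$ to recover $W_{2,3/2}(x)$.

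The only step requiring a bit of care is the consolidation into a single sine: one must correctly identify $\pi/2-\theta=\arccos\sqrt{x/4}$ and match signs, and must verify that the two pieces of the Slater sum — which a priori carry different powers of $z$ — indeed merge into the common factor $x^{(r-2)/2}$. Everything else is bookkeeping.
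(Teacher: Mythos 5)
Your proposal is correct and follows essentially the same route as the paper: specializing Theorem~\ref{thmslater} to $k=2$, $l=1$, evaluating $\gamma(2,1,r)$, $c(1,2,1,r)$, $c(2,2,1,r)$ via the reflection formula, invoking the identities ${}_2F_1(\tfrac r2,-\tfrac r2;\tfrac12\,|\,z)=\cos(r\arcsin\sqrt z)$ and ${}_2F_1(\tfrac{1+r}2,\tfrac{1-r}2;\tfrac32\,|\,z)=\sin(r\arcsin\sqrt z)/(r\sqrt z)$, and collapsing the two terms with the sine subtraction formula into $\sin(r\arccos\sqrt{x/4})$. The only difference is cosmetic: you spell out the half-angle manipulations for $r=1/2$ and $r=3/2$, which the paper leaves implicit.
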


Note that if $r>2$ then $W_{2,r}(x)<0$ for some values of $x\in(0,4)$.

\begin{proof} We take $k=2$, $l=1$ so that $c(2)=4$, $z=x/4$ and
$\gamma(2,1,r)={r2^r}/{\left(8\sqrt{\pi}\right)}.$
Using the Euler's reflection formula and the identity $\Gamma(1+r/2)=\Gamma(r/2)r/2$ we get
\begin{align*}
c(1,2,1,r)&=\frac{\Gamma(1/2)}{\Gamma(1-r/2)\Gamma(1+r/2)}
=\frac{2\sin(\pi r/2)}{r\sqrt{\pi}},\\
c(2,2,1,r)&=\frac{\Gamma(-1/2)}{\Gamma((1-r)/2)\Gamma((1+r)/2)}=\frac{-2\cos(\pi r/2)}{\sqrt{\pi}}.
\end{align*}
We also need formulas for two hypergeometric functions, namely
\begin{align*}
{}_{2}F_{1}\!\left(\left.
\frac{r}{2},\frac{-r}{2};\,\frac{1}{2}\,
\right|z\right)&=\cos(r\arcsin\sqrt{z}),\\
{}_{2}F_{1}\!\left(\left.
\frac{1+r}{2},\frac{1-r}{2};\,\frac{3}{2}\,
\right|z\right)&=\frac{\sin(r\arcsin\sqrt{z})}{r\sqrt{z}},
\end{align*}
see 15.4.12 and 15.4.16 in \cite{olver}.
Now we can write
\[
W_{2,r}(x)=\frac{\sin(\pi r/2)\cos\left(r\arcsin\sqrt{x/4}\right)
-\cos(\pi r/2)\sin\left(r\arcsin\sqrt{x/4}\right)}{\pi x^{1-r/2}}
\]
\[
=\frac{\sin\left(\pi r/2-r\arcsin\sqrt{x/4}\right)}{\pi x^{1-r/2}}
=\frac{\sin\left(r\arccos\sqrt{x/4}\right)}{\pi x^{1-r/2}},
\]
which concludes the proof.
\end{proof}

\textbf{Remark.}
Note that
\begin{equation}
\frac{W_{2,1}\left(\sqrt{x}\right)}{2\sqrt{x}}=\frac{1}{4}W_{2,1/2}\left(\frac{x}{4}\right).
\end{equation}
It means that if $X,Y$ are random variables such that $X\sim\mu(2,1)$ and $Y\sim\mu(2,1/2)$
then $X^2\sim4Y$. This can be also derived from the relation $A_{m}(2,1/2)4^m=A_{2m}(2,1)$.

\section{Some particular cases}

In this part we will see that for $k=3$ some densities still can
be represented as elementary functions. We will need two
families of
formulas (cf. 15.4.17 in \cite{olver}).

\begin{lemma}
For $c\ne0,-1,-2,\ldots$ we have
\begin{align}
{}_{2}F_{1}\!\left(\left.
\frac{c}{2},\frac{c-1}{2};\,c\right|z\right)
&=2^{c-1}\big(1+\sqrt{1-z}\big)^{1-c},\label{parthyperg1}\\
{}_{2}F_{1}\!\left(\left.
\frac{c+1}{2},\frac{c-2}{2};\,c\right|z\right)
&=\frac{2^{c-1}}{c}\big(1+\sqrt{1-z}\big)^{1-c}\big(c-1+\sqrt{1-z}\big).\label{parthyperg2}
\end{align}
\end{lemma}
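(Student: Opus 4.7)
The plan is to verify each identity by showing that its right-hand side is the unique solution of the Gauss hypergeometric equation
\[z(1-z)y''+\bigl(c-(a+b+1)z\bigr)y'-ab\,y=0\]
that is holomorphic at $z=0$ with value $1$ there. Since $c\notin\{0,-1,-2,\ldots\}$, the indicial exponent $0$ is the only non-negative integer root at the regular singular point $z=0$; consequently such a solution is unique and coincides with ${}_{2}F_{1}(a,b;c;z)$. Both right-hand sides are manifestly holomorphic at $z=0$ because $1+\sqrt{1-z}$ takes the value $2\ne 0$ there.

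For \eqref{parthyperg1}, I set $y(z):=2^{c-1}(1+s)^{1-c}$ with $s:=\sqrt{1-z}$ and take $(a,b)=\bigl((c-1)/2,\,c/2\bigr)$, so that $a+b+1=c+1/2$ and $ab=c(c-1)/4$. Differentiating via the chain rule with $ds/dz=-1/(2s)$ yields $y'$ and $y''$ as products of $(1+s)^{-c-k}$ with simple rational functions of $s$. Using $1-z=s^{2}$ and $z(1-z)=s^{2}(1-s)(1+s)$, the three terms of the ODE share a common factor of $2^{c-3}(c-1)(1+s)^{-c}/s$; after clearing it, the identity reduces to a polynomial identity in $s$ of degree at most three, which I check by term-by-term expansion. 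The value $y(0)=2^{c-1}\cdot 2^{1-c}=1$ then pins the solution.

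For \eqref{parthyperg2} the same procedure is applied to $y(z):=(2^{c-1}/c)(1+s)^{1-c}(c-1+s)$ with $(a,b)=\bigl((c+1)/2,\,(c-2)/2\bigr)$. Crucially $a+b+1=c+1/2$ is unchanged, so only the zeroth-order coefficient $ab=(c+1)(c-2)/4$ differs from the previous case. The extra factor $c-1+s$ produces additional cross-terms in $y'$ and $y''$, but after substitution and clearing a common $(1+s)^{-c}$-type factor the ODE again collapses to a low-degree polynomial identity in $s$ whose coefficients vanish. The value $y(0)=(2^{c-1}/c)\cdot 2^{1-c}\cdot c=1$ matches, and uniqueness concludes.

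The main obstacle is the algebraic bookkeeping in \eqref{parthyperg2}: the factor $c-1+s$ forces several cancellations among terms of mixed sign, though systematic use of $1-z=s^{2}$ reduces the check to a mechanical polynomial identity. A slicker alternative route is to recognize \eqref{parthyperg1} as the classical Gauss identity ${}_{2}F_{1}\bigl(a,a+\tfrac12;\,2a+1;z\bigr)=\bigl(2/(1+\sqrt{1-z})\bigr)^{2a}$ with $a=(c-1)/2$, and to extract \eqref{parthyperg2} from it via Euler's transformation $F(a,b;c;z)=(1-z)^{c-a-b}F(c-a,c-b;c;z)$ combined with a single Gauss contiguous relation shifting one upper parameter by $1$.
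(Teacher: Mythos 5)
Your proposal follows exactly the paper's own route: both verify that the right-hand sides are analytic at $z=0$, take the value $1$ there, and satisfy the hypergeometric equation $z(1-z)y''+[c-(a+b+1)z]y'-ab\,y=0$ with the indicated parameters, then invoke uniqueness of such a solution to identify them with ${}_{2}F_{1}$. Your parameter checks ($a+b+1=c+1/2$ in both cases, $ab=c(c-1)/4$ resp. $(c+1)(c-2)/4$, and $y(0)=1$) are correct, so the proof is sound and matches the paper's argument, with the Gauss-identity-plus-contiguous-relation alternative being a nice optional shortcut.
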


\begin{proof}
We know that ${}_{2}F_{1}\!\left(\left.a,b;\,c\right|z\right)$
is the unique function $f$ which is analytic at $z=0$,
with $f(0)=1$, and satisfies the \textit{hypergeometric equation}:
\[
z(1-z)f''(z)+\big[c-(a+b+1)z\big]f'(z)-abf(z)=0
\]
(see \cite{andrews}).
Now one can check that this equation is satisfied
by the right hand sides of (\ref{parthyperg1}) and (\ref{parthyperg2})
for given parameters~$a,b,c$.
\end{proof}

Now consider $p=3/2$.

\begin{theorem}\label{thmtrzydrugie}
Assume that $p=3/2$. Then for $r=1/2$ we have
\begin{equation}\label{partpoltorapol}
W_{3/2,1/2}(x)=\frac{\left(1+\sqrt{1-4x^2/27}\right)^{2/3}-\left(1-\sqrt{1-4x^2/27}\right)^{2/3}}
{2^{5/3}3^{-1/2}\pi x^{2/3}},
\end{equation}
for $r=1$:
\begin{equation}\label{partpoltorajeden}
W_{3/2,1}(x)=3^{1/2}\frac{\left(1+\sqrt{1-4x^2/27}\right)^{1/3}
-\left(1-\sqrt{1-4x^2/27}\right)^{1/3}}
{2^{4/3}\pi x^{1/3}}\end{equation}
\[+3^{1/2}x^{1/3}\frac{\left(1+\sqrt{1-4x^2/27}\right)^{2/3}
-\left(1-\sqrt{1-4x^2/27}\right)^{2/3}}
{2^{5/3}\pi}
\]
and, finally, $W_{3/2,3/2}(x)=x\cdot W_{3/2,1}(x)$,
with $x\in(0,3\sqrt{3}/2)$.
\end{theorem}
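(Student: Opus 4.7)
The plan is to specialize Theorem~\ref{thmslater} to $k=3$, $l=2$ (so $p=3/2$, $c(p)=3\sqrt{3}/2$) and $r\in\{1/2,1\}$; the case $r=3/2$ will follow from (\ref{convpp}). With $z=x^2/c(p)^2=4x^2/27$, the Slater expansion writes $W_{3/2,r}(x)$ as a sum over $h\in\{1,2,3\}$ of $\gamma(3,2,r)\,c(h,3,2,r)$ times $z^{(r+h-1)/3-1/2}$ times a ${}_{3}F_{2}$. The first step is to tabulate the parameter vectors $\mathbf{a}(h,3,2,r)$ and $\mathbf{b}(h,3,2,r)$ for all six pairs $(h,r)$ and check which ${}_{3}F_{2}$'s collapse.

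For $r=1/2$, the $h=1$ parameters are $\mathbf{a}=(2/3,1/6,-1/3)$, $\mathbf{b}=(2/3,1/3)$; the repeated $2/3$ cancels, leaving ${}_{2}F_{1}(1/6,-1/3;1/3\mid z)$. The $h=3$ case similarly collapses to ${}_{2}F_{1}(5/6,1/3;5/3\mid z)$. The $h=2$ case has $0$ as a numerator parameter, but the coefficient $c(2,3,2,1/2)$ contains $1/\Gamma(0)$ in its denominator, so that term vanishes. An analogous bookkeeping for $r=1$ gives contributions only from $h=1$ and $h=2$, reducing to ${}_{2}F_{1}(5/6,-2/3;2/3\mid z)$ and ${}_{2}F_{1}(7/6,-1/3;4/3\mid z)$ respectively, while the $h=3$ term vanishes for the same $\Gamma(0)$ reason.

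In each case the surviving ${}_{2}F_{1}$ fits exactly one of (\ref{parthyperg1})--(\ref{parthyperg2}): the $r=1/2$ reductions match (\ref{parthyperg1}) with $c=1/3$ and $c=5/3$, and the $r=1$ reductions match (\ref{parthyperg2}) with $c=2/3$ and $c=4/3$. Setting $w=\sqrt{1-z}=\sqrt{1-4x^{2}/27}$, the lemma produces algebraic expressions in $(1+w)$; using $(1+w)(1-w)=z$, equivalently $(1-w)^{s}=z^{s}(1+w)^{-s}$, the negative powers of $(1+w)$ arising from the $h=3$ (resp.\ $h=2$) term can be traded for the corresponding positive powers of $(1-w)$. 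After gathering the $z^{\pm 1/6}$ or $z^{\pm 1/3}$ factors and using the Gamma reflection identity $\Gamma(s)\Gamma(1-s)=\pi/\sin\pi s$ together with $\Gamma(s+1)=s\Gamma(s)$ to simplify $\gamma(3,2,r)\,c(h,3,2,r)$, the two surviving contributions acquire a common prefactor but opposite signs in front of $(1+w)^{s}$ and $(1-w)^{s}$, producing the differences displayed in (\ref{partpoltorapol}) and (\ref{partpoltorajeden}).

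The relation $W_{3/2,3/2}(x)=x\cdot W_{3/2,1}(x)$ is immediate from (\ref{convpp}). The main technical obstacle is the numerical bookkeeping in the previous paragraph: one has to verify that, after combining the $\gamma$ prefactor, the explicit values of $c(h,3,2,r)$, the factors $2^{c-1}$ and $1/c$ from the lemma, and the constants $(27/4)^{\pm 1/6}$ coming from $z^{\pm 1/6}$, the coefficients of $(1+w)^{s}$ and $(1-w)^{s}$ are true negatives of each other, with the rational prefactors $3^{1/2}/(2^{5/3}\pi)$, $3^{1/2}/(2^{4/3}\pi)$, $3^{1/2}/(2^{5/3}\pi)$ of the statement. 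This is a routine but delicate Gamma-function simplification; once done, the elementary form of $W_{3/2,1/2}$ and $W_{3/2,1}$ falls out directly.
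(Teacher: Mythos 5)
Your proposal follows essentially the same route as the paper's own proof: specialize Theorem~\ref{thmslater} to $k=3$, $l=2$, note that one of the three Slater terms vanishes (the $h=2$ term for $r=1/2$, the $h=3$ term for $r=1$, via the $\Gamma(0)$ in the denominator of the coefficient, equivalently the vanishing sine factor), reduce the surviving ${}_{3}F_{2}$'s to ${}_{2}F_{1}$'s matching (\ref{parthyperg1}) with $c=1/3,5/3$ and (\ref{parthyperg2}) with $c=2/3,4/3$, convert negative powers of $1+\sqrt{1-z}$ into positive powers of $1-\sqrt{1-z}$ via $(1+\sqrt{1-z})(1-\sqrt{1-z})=z$, and invoke (\ref{convpp}) for $r=3/2$. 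The only part left implicit (which the paper writes out) is the closing algebra; note that for $r=1$ it is slightly more than a sign-matching of prefactors, since the factors $(3\sqrt{1-z}\mp1)$ produced by (\ref{parthyperg2}) must each be split into two pieces — using exactly your identity — before the two displayed differences in (\ref{partpoltorajeden}) emerge.
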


\begin{proof} For arbitrary $r$ we have
\begin{align*}
W_{3/2,r}(x)=\frac{2^{1-2r/3}\sin\big(2\pi r/3\big)}{3^{3/2-r}\pi}\,
{}_{3}F_{2}\!\left(\left.
\frac{3+2r}{6},\frac{r}{3},\frac{-2r}{3};\,\frac{2}{3},\frac{1}{3}
\right|z\right)&z^{r/3-1/2}\\
-\frac{2^{(4-2r)/3}r\sin\big((1-2r)\pi/3\big)}{3^{3/2-r}\pi}\,
{}_{3}F_{2}\!\left(\left.
\frac{5+2r}{6},\frac{1+r}{3},\frac{1-2r}{3};\,\frac{4}{3},\frac{2}{3}
\right|z\right)&z^{(r+1)/3-1/2}\\
-\frac{r(1+2r)\sin\big((1+2r)\pi/3\big)}{2^{(1+2r)/3}3^{3/2-r}\pi}\,
{}_{3}F_{2}\!\left(\left.
\frac{7+2r}{6},\frac{2+r}{3},\frac{2-2r}{3};\,\frac{5}{3},\frac{4}{3}
\right|z\right)&z^{(r+2)/3-1/2},
\end{align*}
where $z=4x^2/27$. If $r=1/2$ or $r=1$ then one term vanishes
and in the two others the hypergeometric functions reduce to ${}_2 F_{1}$.

For $r=1/2$ we apply (\ref{parthyperg1}) to obtain:
\[
W_{3/2,1/2}(x)=\frac{z^{-1/3}}{2^{1/3}3^{1/2}\pi}\,
{}_{2}F_{1}\!\left(\left.
\frac{1}{6},\frac{-1}{3};\,\frac{1}{3}\right|z\right)
-\frac{z^{1/3}}{2^{5/3}3^{1/2}\pi}\,
{}_{2}F_{1}\!\left(\left.
\frac{5}{6},\frac{1}{3};\,\frac{5}{3}\right|z\right)\]
\[
=\frac{z^{-1/3}}{2^{1/3}3^{1/2}\pi}\,
2^{-2/3}\left(1+\sqrt{1-z}\right)^{2/3}
-\frac{z^{1/3}}{2^{5/3}3^{1/2}\pi}\,
2^{2/3}\left(1+\sqrt{1-z}\right)^{-2/3}\]
\[
=\frac{z^{-1/3}}{2\cdot 3^{1/2}\pi}\,
\left(1+\sqrt{1-z}\right)^{2/3}
-\frac{z^{1/3}}{2\cdot 3^{1/2}\pi}\,
\left(\frac{1-\sqrt{1-z}}{z}\right)^{2/3}\]
\[
=\frac{z^{-1/3}}{2\cdot 3^{1/2}\pi}\,
\left(1+\sqrt{1-z}\right)^{2/3}
-\frac{z^{-1/3}}{2\cdot 3^{1/2}\pi}\,
\left(1-\sqrt{1-z}\right)^{2/3}
\]
and this yields (\ref{partpoltorapol}).

For $r=1$ we use (\ref{parthyperg2}):
\[
W_{3/2,1}(x)=\frac{z^{-1/6}}{2^{2/3}\pi}\,
{}_{2}F_{1}\!\left(\left.
\frac{5}{6},\frac{-2}{3};\,\frac{2}{3}\right|z\right)
+\frac{z^{1/6}}{2^{1/3}\pi}\,
{}_{2}F_{1}\!\left(\left.
\frac{7}{6},\frac{-1}{3};\,\frac{4}{3}\right|z\right)
\]
\[
=\frac{z^{-1/6}}{4\pi}\left(1+\sqrt{1-z}\right)^{1/3}\left(3\sqrt{1-z}-1\right)
+\frac{z^{1/6}}{4\pi}\left(1+\sqrt{1-z}\right)^{-1/3}\left(3\sqrt{1-z}+1\right)
\]
\[
=\frac{z^{-1/6}}{4\pi}\left(1+\sqrt{1-z}\right)^{1/3}\left(3\sqrt{1-z}-1\right)
+\frac{z^{-1/6}}{4\pi}\left(1-\sqrt{1-z}\right)^{1/3}\left(3\sqrt{1-z}+1\right).
\]
Now we have
\[
\left(1+\sqrt{1-z}\right)^{1/3}\left(3\sqrt{1-z}-1\right)
=-\left(1+\sqrt{1-z}\right)^{1/3}\left(3-3\sqrt{1-z}-2\right)
\]
\[
=-3z^{1/3}\left(1-\sqrt{1-z}\right)^{2/3}+2\left(1+\sqrt{1-z}\right)^{1/3}
\]
and similarly
\[
\left(1-\sqrt{1-z}\right)^{1/3}\left(3\sqrt{1-z}+1\right)
=3z^{1/3}\left(1+\sqrt{1-z}\right)^{2/3}-2\left(1-\sqrt{1-z}\right)^{1/3}.
\]
Therefore
\[
W_{3/2,1}(x)=
\frac{z^{-1/6}}{2\pi}\left(\left(1+\sqrt{1-z}\right)^{1/3}-\left(1-\sqrt{1-z}\right)^{1/3}\right)
\]
\[
+\frac{3z^{1/6}}{4\pi}\left(\left(1+\sqrt{1-z}\right)^{2/3}-\left(1-\sqrt{1-z}\right)^{2/3}\right),
\]
which entails (\ref{partpoltorajeden}).
The last statement is a consequence of (\ref{convpp}).
\end{proof}

The dilation $\mathbf{D}_{2}\mu(3/2,1/2)$,
with the density $W_{3/2,1/2}(x/2)/2$, is known as the \textit{Bures distribution},
see (4.4) in \cite{sz2004}. Moreover, the integer sequence $4^m A_m(3/2,1/2)$,
moments of $\mathbf{D}_{4}\mu(3/2,1/2)$, appears as A078531 in \cite{oeis}
and counts the number of symmetric noncrossing connected graphs on $2n+1$
equidistant nodes on a circle. The axis of symmetry is a diameter
of a circle passing through a given node, see \cite{flajoletnoy}.

The measure $\mu(3/2,1)$ is equal to $\mu(2,1)^{\boxtimes 1/2}$,
the multiplicative free square root of the Marchenko-Pastur distribution.

For the sake of completeness we also include the cases $p=3,r=1$
and $p=3,r=2$, which have already appeared in \cite{ps,pezy}.

\begin{theorem}\label{thmtrzy}
Assume that $p=3$. Then for $r=1$ we have
\begin{equation}\label{parttrzyjeden}
W_{3,1}(x)=\frac{3\left(1+\sqrt{1-4x/27}\right)^{2/3}-2^{2/3} x^{1/3}}
{2^{4/3}3^{1/2}\pi x^{2/3}\left(1+\sqrt{1-4x/27}\right)^{1/3}},
\end{equation}
for $r=2$:
\begin{equation}\label{parttrzydwa}
W_{3,2}(x)=\frac{9\left(1+\sqrt{1-4x/27}\right)^{4/3}-2^{4/3}x^{2/3}}
{2^{5/3} 3^{3/2}\pi x^{1/3}\left(1+\sqrt{1-4x/27}\right)^{2/3}}
\end{equation}
and, finally, $W_{3,3}(x)=x\cdot W_{3,1}(x)$,
with $x\in(0,27/4)$.
\end{theorem}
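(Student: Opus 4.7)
The plan is to mimic the argument used for Theorem~\ref{thmtrzydrugie}: specialize the Slater expansion (\ref{meislater}) to $p=3$, identify the surviving terms, collapse each ${}_{3}F_{2}$ to a ${}_{2}F_{1}$, invoke the preceding Lemma to obtain closed forms, and then simplify.

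Setting $k=3$, $l=1$ gives $c(3)=27/4$ and $z=4x/27$, and (\ref{meislater}) expresses $W_{3,r}(x)$ as a sum of three terms indexed by $h\in\{1,2,3\}$. The coefficient $c(h,3,1,r)$ contains the factor $1/\Gamma\bigl(1-(r+h-1)/3\bigr)$ in its denominator (the $j=1$ factor of the first product, since $l=1$), which vanishes exactly when $(r+h-1)/3$ is a nonnegative integer. Consequently, for $r=1$ the $h=3$ contribution drops out, and for $r=2$ the $h=2$ contribution drops out, leaving a two-term sum in each case.

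Inspecting $\mathbf{a}(h,3,1,r)$ and $\mathbf{b}(h,3,1,r)$ in the surviving terms, one checks that one entry of $\mathbf{a}$ always coincides with one entry of $\mathbf{b}$, so the ${}_{3}F_{2}$ collapses to a ${}_{2}F_{1}$. A direct computation shows that in all four cases the resulting ${}_{2}F_{1}$ has parameters of the shape $(c/2,(c-1)/2;\,c)$, with $c\in\{2/3,4/3\}$ for $r=1$ and $c\in\{1/3,5/3\}$ for $r=2$, so formula (\ref{parthyperg1}) applies and produces factors $(1+\sqrt{1-z})^{\pm 1/3}$ in the first case and $(1+\sqrt{1-z})^{\pm 2/3}$ in the second.

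The last step is algebraic cleanup: combine the two surviving contributions over a common denominator of the form $(1+\sqrt{1-z})^{1/3}$ or $(1+\sqrt{1-z})^{2/3}$, collect the gamma-function constants from $\gamma(3,1,r)$ and $c(h,3,1,r)$ together with the $2^{c-1}$ factors supplied by the Lemma, and revert to $x$ via $z=4x/27$; this yields (\ref{parttrzyjeden}) and (\ref{parttrzydwa}). The relation $W_{3,3}(x)=x\,W_{3,1}(x)$ is the case $p=3$ of (\ref{convpp}). I expect the main obstacle to lie in this final simplification: the various fractional powers of $z$, the gamma-function coefficients, and the $2^{c-1}$ constants must conspire so that two terms with different exponents of $(1+\sqrt{1-z})$ telescope into the single compact fraction displayed in the statement.
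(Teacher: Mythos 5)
Your proposal follows the paper's proof essentially verbatim: the paper likewise specializes the Slater expansion to $k=3$, $l=1$ (giving $z=4x/27$), observes that the $h=3$ (resp.\ $h=2$) term vanishes for $r=1$ (resp.\ $r=2$), collapses the two surviving ${}_{3}F_{2}$'s to ${}_{2}F_{1}$'s exactly of the shape covered by (\ref{parthyperg1}) with $c\in\{2/3,4/3\}$ and $c\in\{1/3,5/3\}$, and finishes with the same common-denominator simplification followed by (\ref{convpp}) for $W_{3,3}$. The only cosmetic difference is that you detect the vanishing term via the pole of $\Gamma\bigl(1-(r+h-1)/3\bigr)$ in the denominator of $c(h,3,1,r)$, whereas the paper's specialized formula exhibits it as a vanishing sine factor (e.g.\ $r(r-1)\sin\bigl((1-r)\pi/3\bigr)$); these are equivalent by the reflection formula.
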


\begin{proof}
For arbitrary $r$ we have
\begin{align*}
W_{3,r}(x)=\frac{2^{(6-2r)/3}\sin\big(\pi r/3\big)}{3^{3-r}\pi}\,
{}_{3}F_{2}\!\left(\left.
\frac{r}{3},\frac{3-r}{6},\frac{-r}{6};\,\frac{2}{3},\frac{1}{3}
\right|z\right)&z^{(r-3)/3}\\
-\frac{2^{(4-2r)/3}r\sin\big((1-2r)\pi/3\big)}{3^{3-r}\pi}\,
{}_{3}F_{2}\!\left(\left.
\frac{1+r}{3},\frac{5-r}{6},\frac{2-r}{6};\,\frac{4}{3},\frac{2}{3}
\right|z\right)&z^{(r-2)/3}\\
+\frac{r(r-1)\sin\big((1-r)\pi/3\big)}{2^{(1+2r)/3}3^{3-r}\pi}\,
{}_{3}F_{2}\!\left(\left.
\frac{2+r}{3},\frac{7-r}{6},\frac{4-r}{6};\,\frac{5}{3},\frac{4}{3}
\right|z\right)&z^{(r-1)/3},
\end{align*}
where $z=4x/27$. For $r=1$ and $r=2$ we have similar reduction
as in the previous proof. Here we will be using only (\ref{parthyperg1}).

Taking $r=1$ we get
\[
W_{3,1}(x)=\frac{2^{1/3}z^{-2/3}}{3^{3/2}\pi}\,
{}_{2}F_{1}\!\left(\left.
\frac{1}{3},\frac{-1}{6};\,\frac{2}{3}\right|z\right)
-\frac{z^{-1/3}}{2^{1/3}3^{3/2}\pi}\,
{}_{2}F_{1}\!\left(\left.
\frac{2}{3},\frac{1}{6};\,\frac{4}{3}\right|z\right)
\]
\[
=\frac{z^{-2/3}}{3^{3/2}\pi}\left(1+\sqrt{1-z}\right)^{1/3}
-\frac{z^{-1/3}}{3^{3/2}\pi}\left(1+\sqrt{1-z}\right)^{-1/3}
\]
\[
=\frac{\left(1+\sqrt{1-z}\right)^{2/3}-z^{1/3}}
{3^{3/2}\pi z^{2/3}\left(1+\sqrt{1-z}\right)^{1/3}},
\]
which implies (\ref{parttrzyjeden}).

Now we take $r=2$:
\[
W_{3,2}(x)=\frac{z^{-1/3}}{2^{1/3}3^{1/2}\pi}\,
{}_{2}F_{1}\!\left(\left.
\frac{1}{6},\frac{-1}{3};\,\frac{1}{3}\right|z\right)
-\frac{z^{1/3}}{2^{5/3}3^{1/2}\pi}\,
{}_{2}F_{1}\!\left(\left.
\frac{5}{6},\frac{1}{3};\,\frac{5}{3}\right|z\right)
\]
\[
=\frac{z^{-1/3}}{2\cdot 3^{1/2}\pi}\left(1+\sqrt{1-z}\right)^{2/3}
-\frac{z^{1/3}}{2\cdot 3^{1/2}\pi}\left(1+\sqrt{1-z}\right)^{-2/3}
\]
\[
=\frac{\left(1+\sqrt{1-z}\right)^{4/3}-z^{2/3}}
{2\cdot 3^{1/2}\pi z^{1/3}\left(1+\sqrt{1-z}\right)^{2/3}},
\]
and this gives us (\ref{parttrzydwa}).
Finally we apply (\ref{convpp}).
\end{proof}

Note that the measure $\mu(3,1)$ is equal to $\mu(2,1)^{\boxtimes 2}$,
the multiplicative free square of the Marchenko-Pastur distribution.

\section{Graphical representation of selected cases}

The explicit form of $W_{p,r}(x)$ given in Theorem~\ref{thmslater}
permits a graphical visualization for any rational $p>0$ and arbitrary $r>0$.
We shall represent some selected cases in Figures 1--9.
These graphs which are partly negative are drawn as dashed curves.
In Fig.~\ref{figura1} the graphs of the functions $W_{3/2,r}(x)$ for values
of $r$ ranging from $0.9$ to $2.3$ are given. For $r\le3/2$ these functions are positive,
otherwise they develop a negative part.
In Fig.~2 we represent $W_{5/2,r}(x)$ for $r$ ranging from $2$ to $3.4$.
In Fig.~3 we display the densities $W_{p,p}(x)$ for $p=6/5,5/4,4/3$ and $3/2$.
All these densities are unimodal and vanish at the extremities of their supports.
They can be therefore considered as generalizations of the Wigner's semicircle
distribution $W_{2,2}(x)$, see equation~(\ref{convsemicircle}).
In Fig.~4 we depict the functions $W_{4/3,r}(x)$, for values $r$ ranging from
$0.8$ to $2.4$. Here for $r\ge1.4$ negative contributions clearly appear.
In Fig.~5 and 6 we present six densities expressible through elementary functions,
namely $W_{3/2,r}(x)$ for $r=1/2,1,3/2$, see Theorem~\ref{thmtrzydrugie}
and $W_{3,r}(x)$ for $r=1,2,3$, see Theorem~\ref{thmtrzy}.
In Fig.~7 the set of densities $W_{p,1}(x)$ for five fractional
values of $p$ is presented. The appearance of maximum near $x=1$
corresponds to the fact that $\mu(p,1)$ weakly converges to $\delta_1$
as $p\to1^{+}$. In Fig.~8 the fine details of densities $W_{p,1}(x)$
for $p=5/2,7/3,9/4,11/5$, on a narrower range $2\le x\le4.5$ are presented.
In Fig.~9 we display the densities $W_{p,1}(x)$ for $p=2,5/2,3,7/2,4$,
near the upper edge of their respective supports, for $3\le x\le 9.5$.

The Fig.~10 summarizes our results in the $p>0,r>0$ quadrant of the
$(p,r)$ plane, describing the Raney numbers
(c.f. Fig.~5.1 in \cite{mlotkowski2010} and Fig.~7 in \cite{pezy}).
The shaded region $\Sigma$ indicates the probability measures $\mu(p,r)$
(i.e. where $W_{p,r}(x)$ is a nonegative function).
The vertical line $p=2$ and the stars indicate the pairs $(p,r)$ for which
$W_{p,r}(x)$ is an elementary function, see Corollary~\ref{cordwa},
Theorem~\ref{thmtrzydrugie} and Theorem~\ref{thmtrzy}.
The points $(3/2,1)$ and $(3,1)$ correspond to the multiplicative
free powers $\mathrm{MP}^{\boxtimes 1/2}$ and $\mathrm{MP}^{\boxtimes 2}$
of the Marchenko-Pastur distribution $\mathrm{MP}$.
Symbol $\mathrm{B}$ at $(3/2,1/2)$ indicates the Bures distribution and $\mathrm{SC}$
at $(2,2)$ denotes the semicircle law centered at $x=2$, with radius 2.

It is our pleasure to thank M.~Bo\.{z}ejko, Z.~Burda, K.~G\'{o}rska, I.~Nechita and M.~A.~Nowak
for fruitful interactions.

\begin{figure}
\caption{Raney distributions $W_{3/2,r}(x)$ with values of the parameter $r$
labeling each curve.  For $r>p$ solutions  drawn with dashed lines
are not positive.}
\centering
\includegraphics[width=0.6\textwidth]{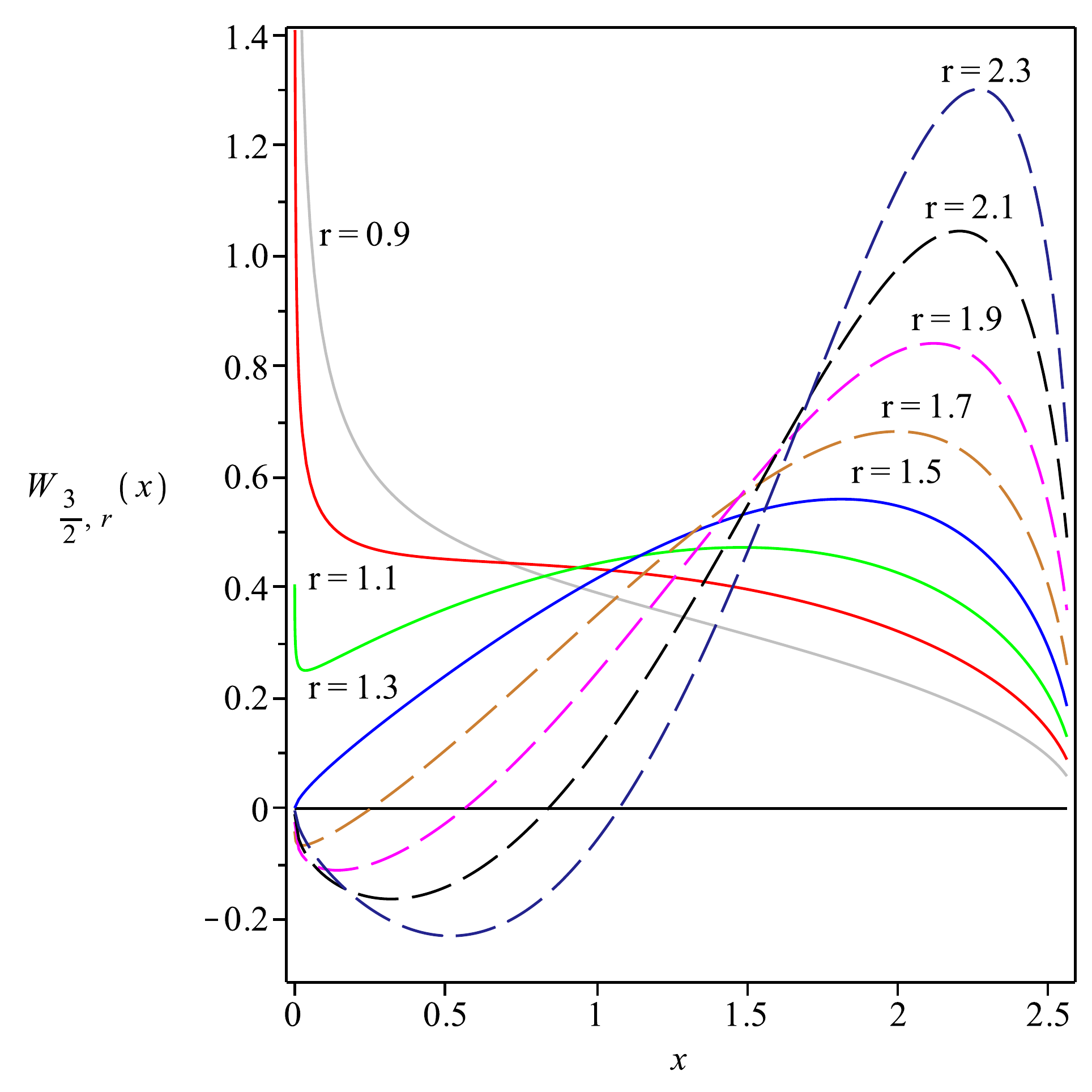}
\label{figura1}
\end{figure}

\begin{figure}
\caption{As in Fig.~\ref{figura1} for Raney distributions $W_{5/2,r}(x)$.}
\centering
\includegraphics[width=0.6\textwidth]{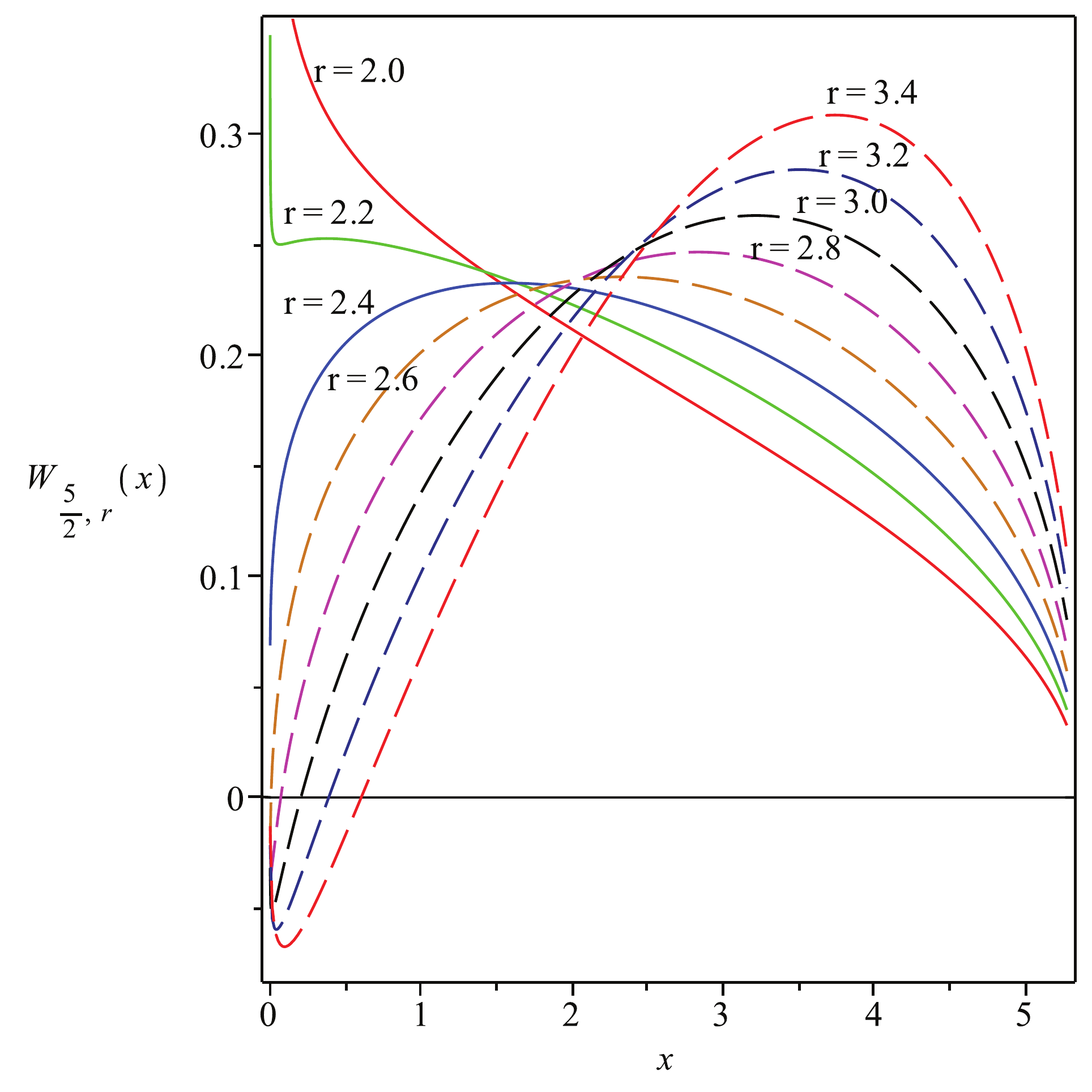}
\label{figura2}
\end{figure}

\begin{figure}
\caption{Diagonal Raney distributions $W_{p,p}(x)$ with values of the
parameter $p$ labeling each curve.}
\centering
\includegraphics[width=0.6\textwidth]{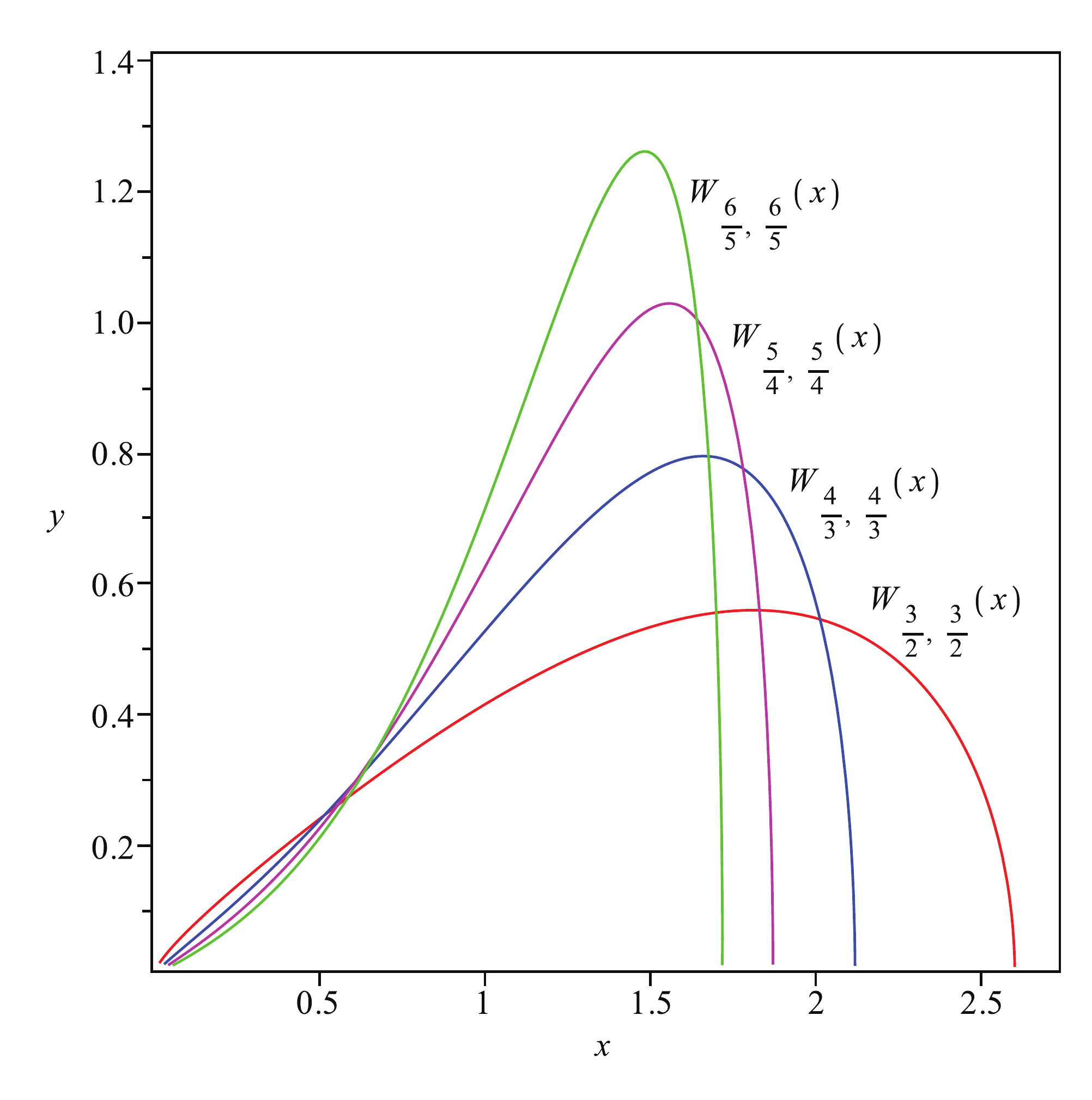}
\label{figura3}
\end{figure}

\begin{figure}
\caption{The functions $W_{4/3,r}(x)$ for $r$ ranging from
$0.8$ to $2.4$.}
\centering
\includegraphics[width=0.6\textwidth]{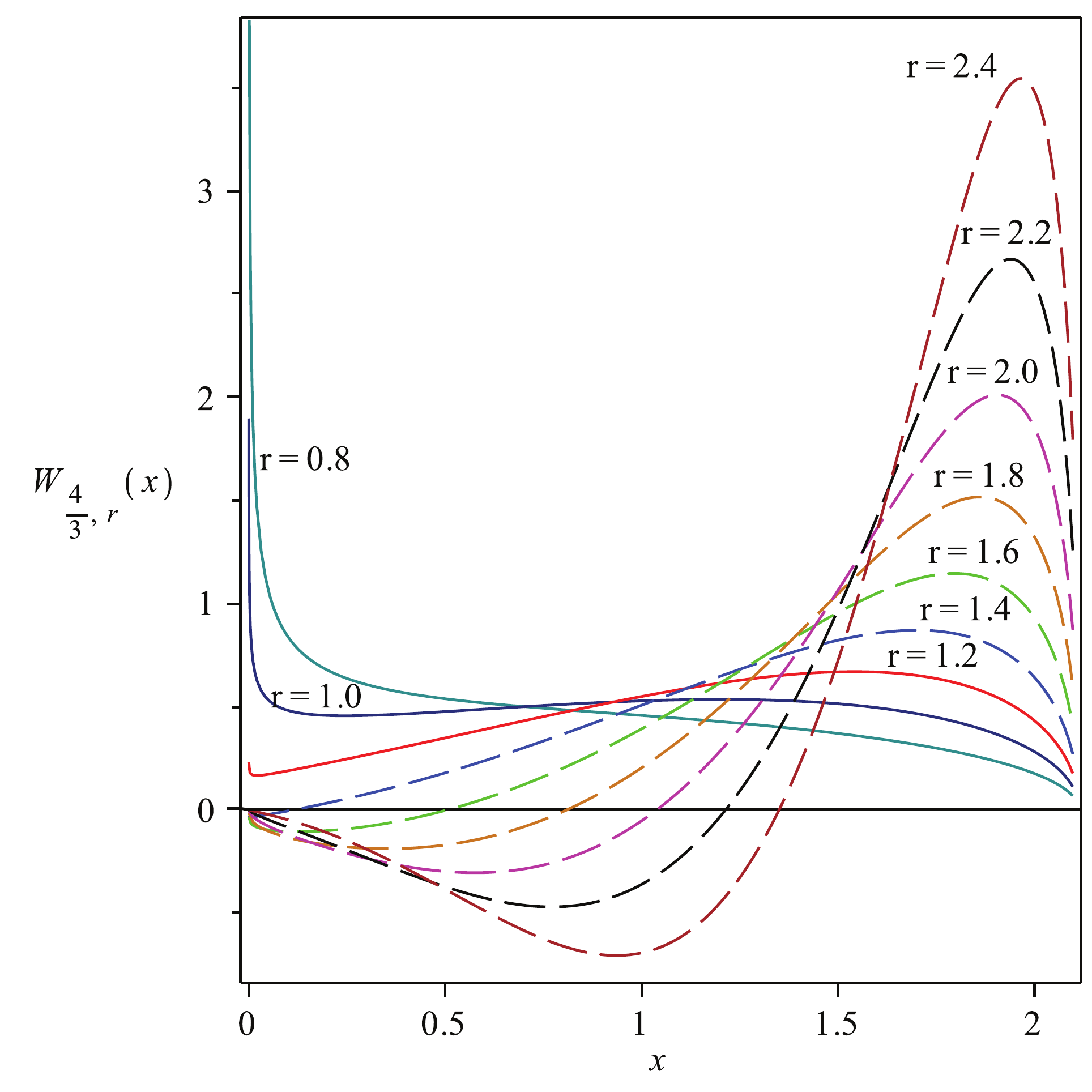}
\label{figura4}
\end{figure}

\begin{figure}
\caption{ Raney distributions $W_{3/2,r}(x)$ with values of the
parameter $r$ labeling each curve. The case $W_{3/2,1}(x)$ represents
the multiplicative free square root of the Marchenko Pastur distribution.}
\centering
\includegraphics[width=0.6\textwidth]{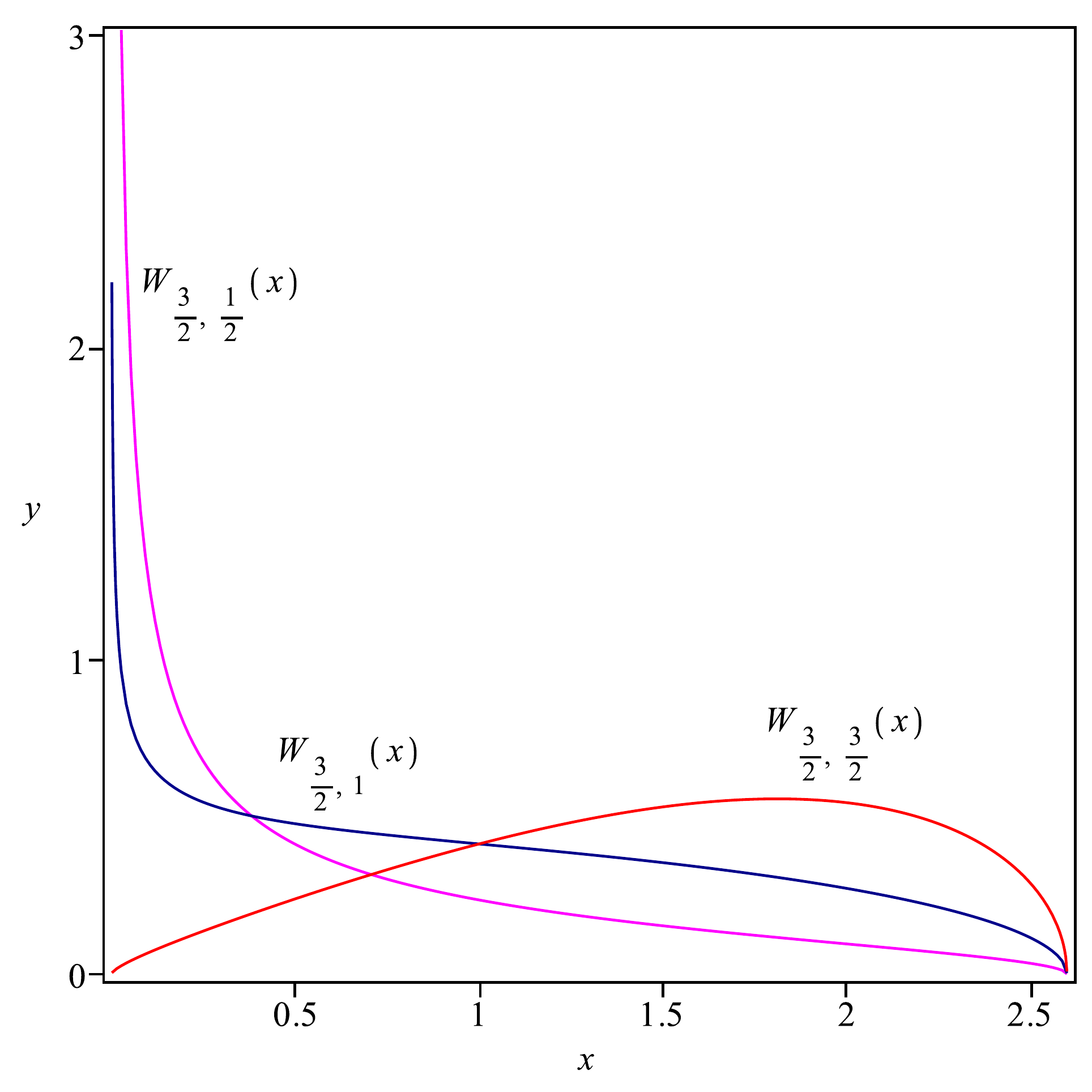}
\label{figura5}
\end{figure}

\begin{figure}
\caption{Raney distributions $W_{3,r}(x)$ with values of the parameter
$r$ labeling each curve. The case $W_{3,1}(x)$
represents the  multiplicative free square of the Marchenko Pastur distribution.}
\centering
\includegraphics[width=0.6\textwidth]{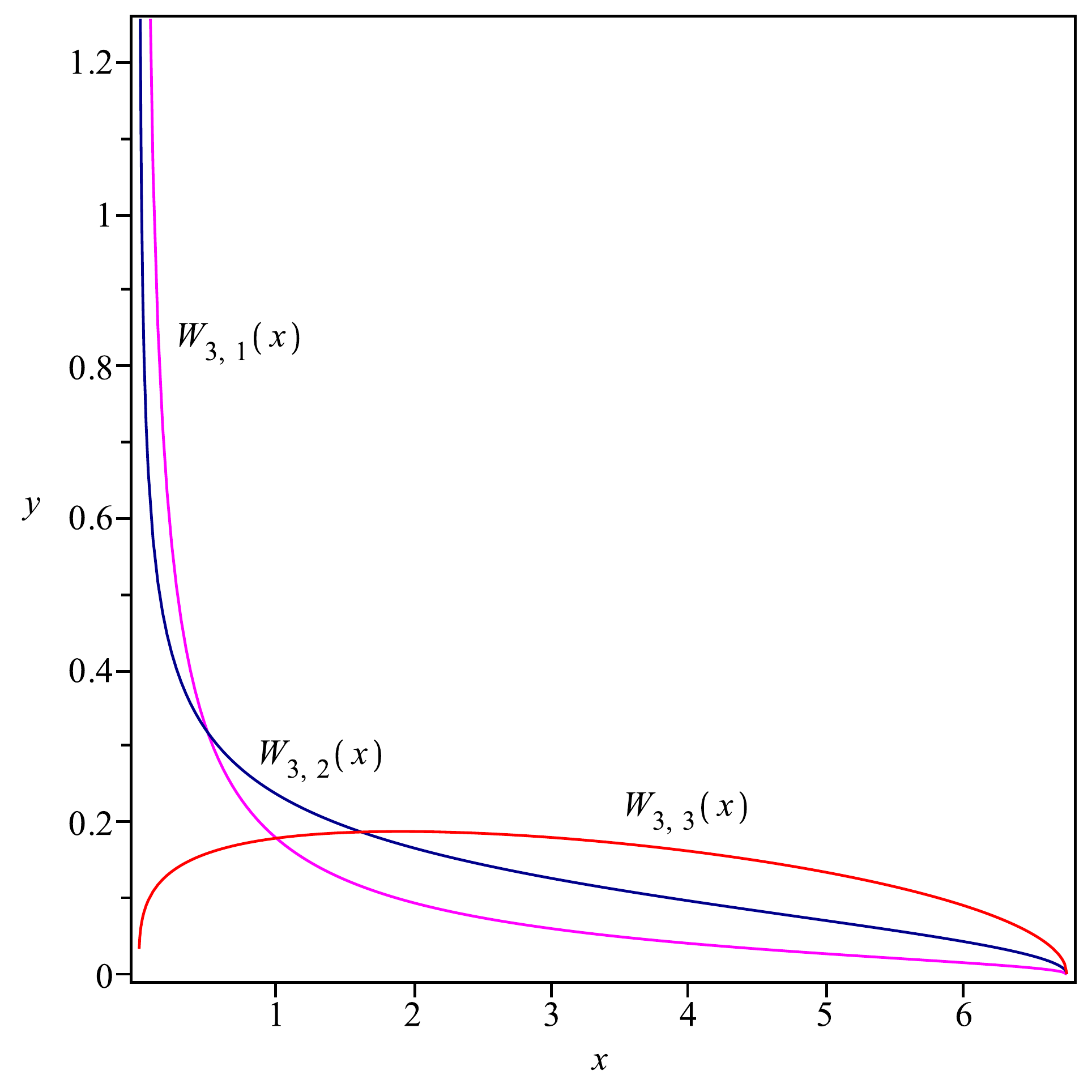}
\label{figura6}
\end{figure}

\begin{figure}
\caption{Raney distributions $W_{p,1}(x)$ with values of the parameter
$p$ labeling each curve.
The case $W_{3/2,1}(x)$ represents
the multiplicative free square root of the Marchenko--Pastur distribution,
$MP^{\boxtimes 1/2}$.}
\centering
\includegraphics[width=0.6\textwidth]{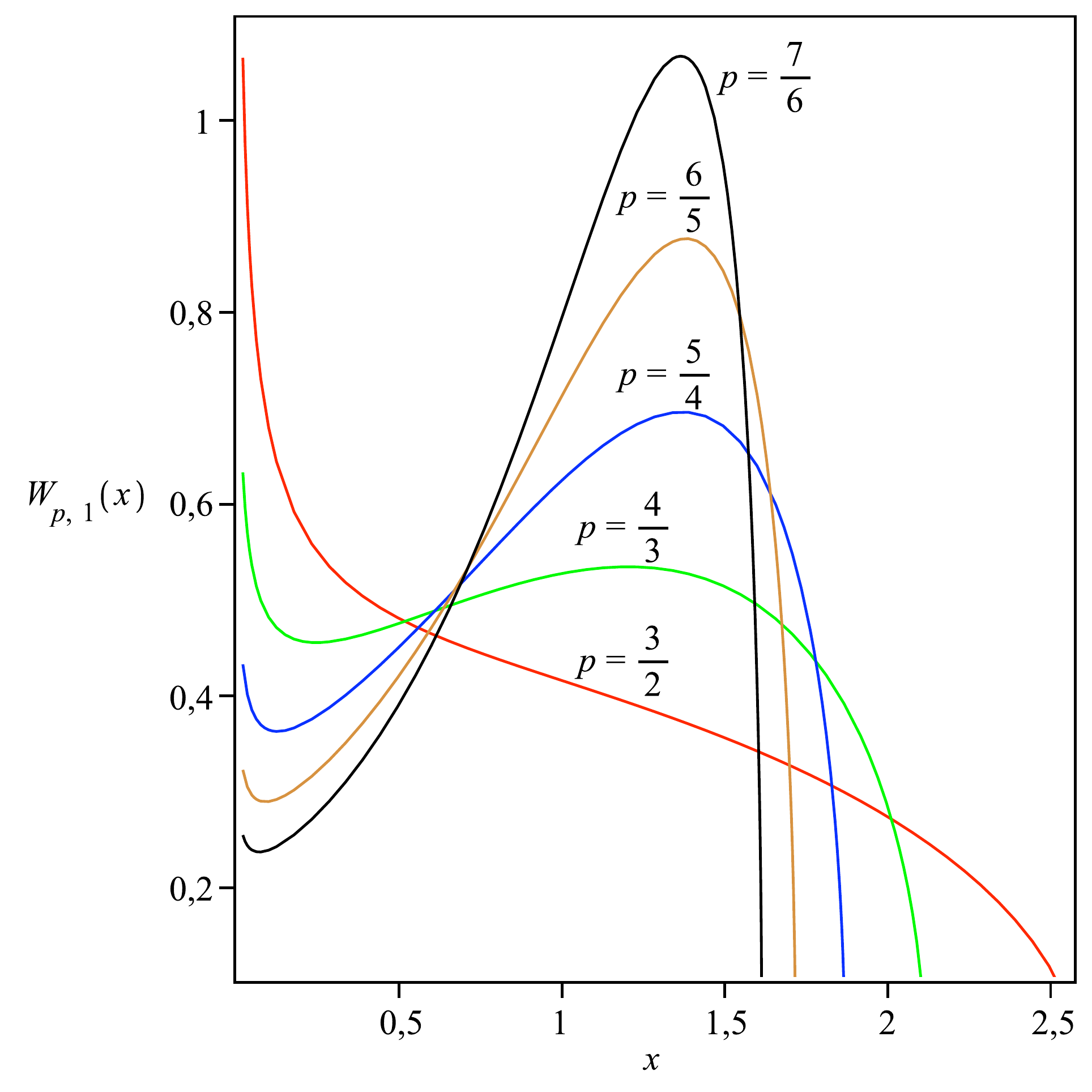}
\label{figura7}
\end{figure}

\begin{figure}
\caption{Tails of the Raney distributions $W_{p,1}(x)$ with values of
the parameter $p$ labeling each curve.}
\centering
\includegraphics[width=0.6\textwidth]{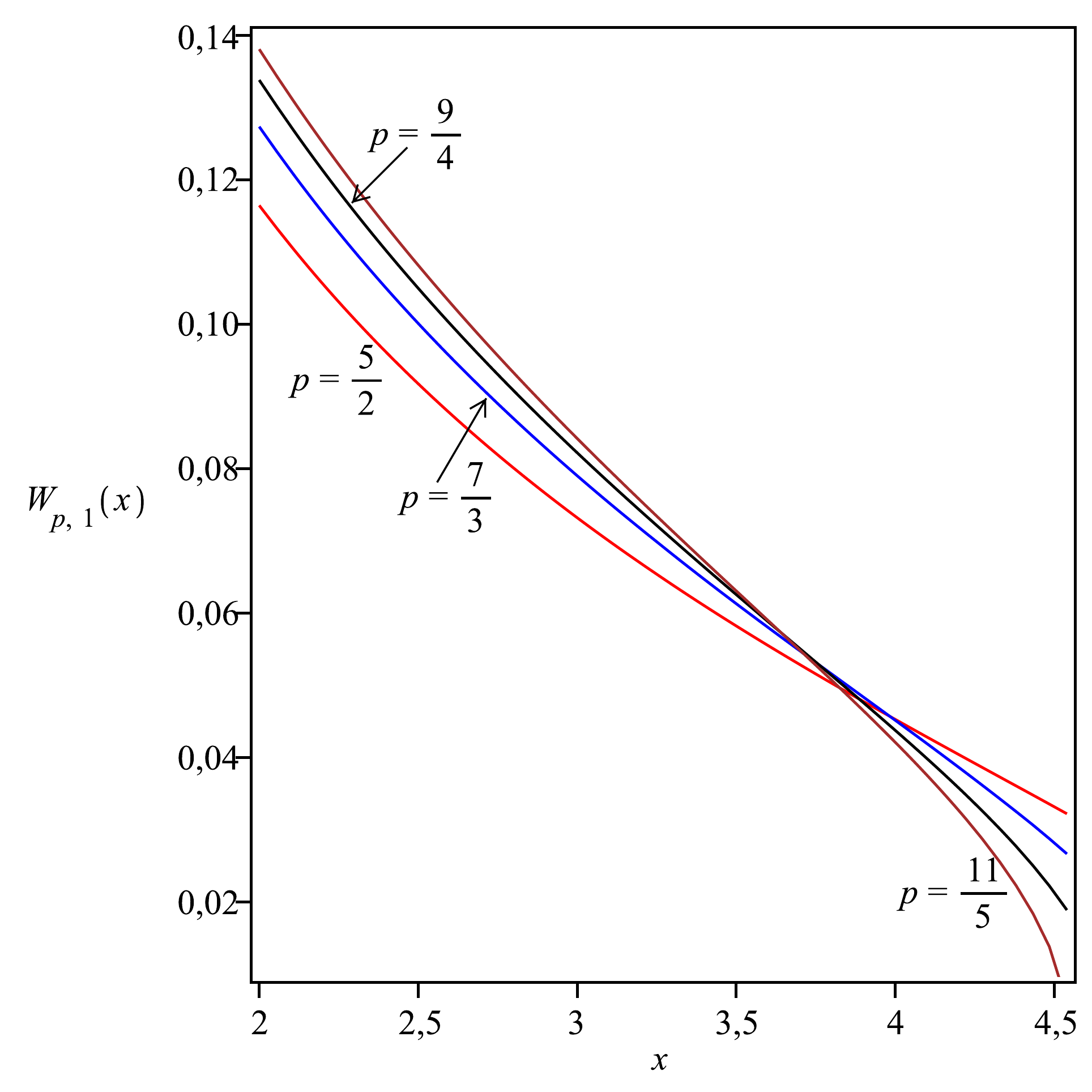}
\label{figura8}
\end{figure}

\begin{figure}
\caption{As in Fig. \ref{figura8} for larger values of the parameter $p$.}
\centering
\includegraphics[width=0.6\textwidth]{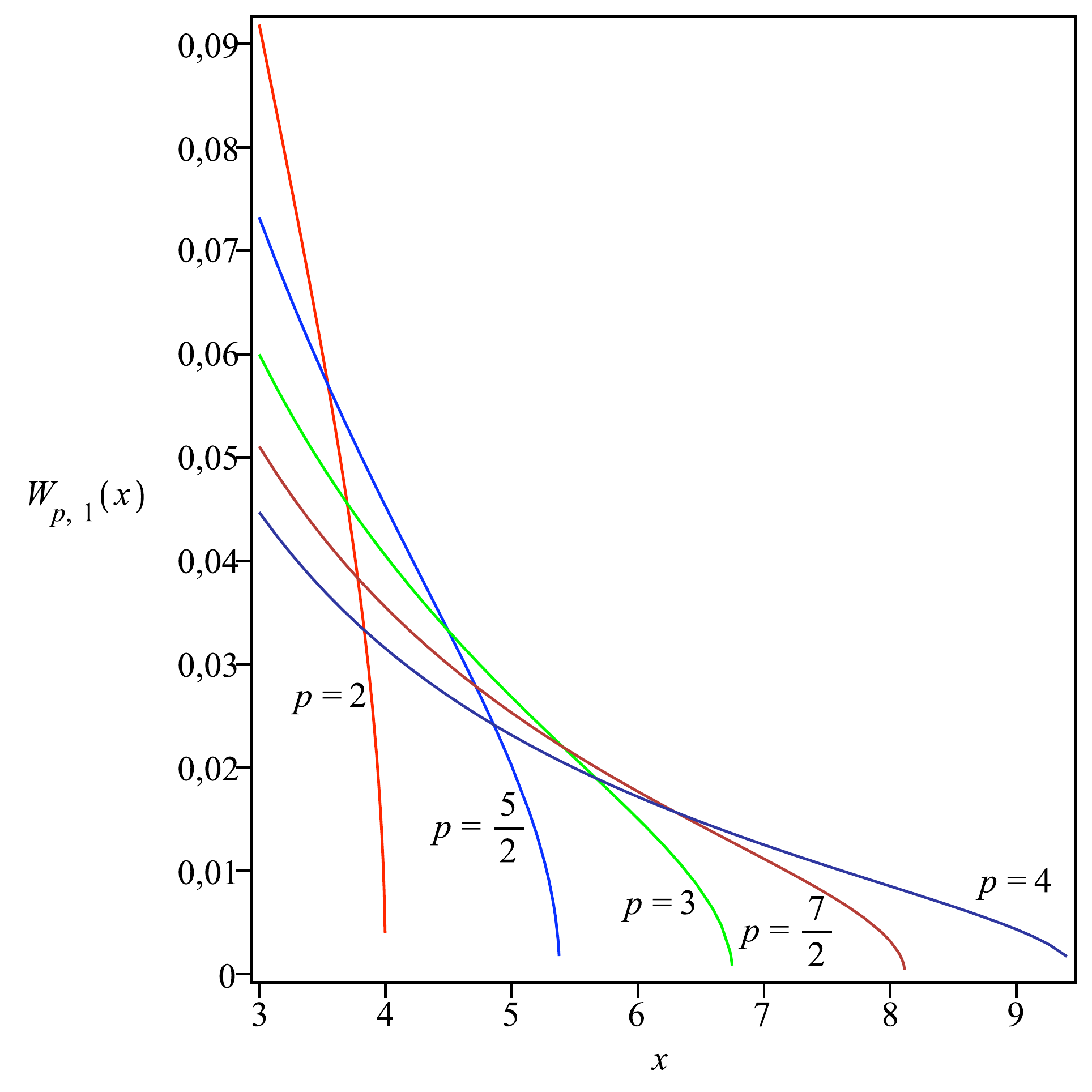}
\label{figura9}
\end{figure}

\begin{figure}
\caption{Parameter plane $(p,r)$ describing the Raney numbers.
The shaded set  $\Sigma$ corresponds  to nonnegative probability
measures $\mu(p,r)$. The vertical line $p=2$ and the stars represent values
of parameters for which $W_{p,r}(x)$ is an elementary function.
Here $\mathrm{MP}$ denotes the Marchenko--Pastur distribution,
$\textrm{MP}^{\boxtimes s}$ its $s$-th free mutiplicative power,  $\mathrm{B}$-the
Bures distribution while $\mathrm{SC}$ denotes the semicircle law.
For $p>1$ the points $(p,p)$ on the upper edge of $\Sigma$ represent the generalizations
of the Wigner semicircle law, see Fig.~\ref{figura3}.}
\centering
\includegraphics[width=0.6\textwidth]{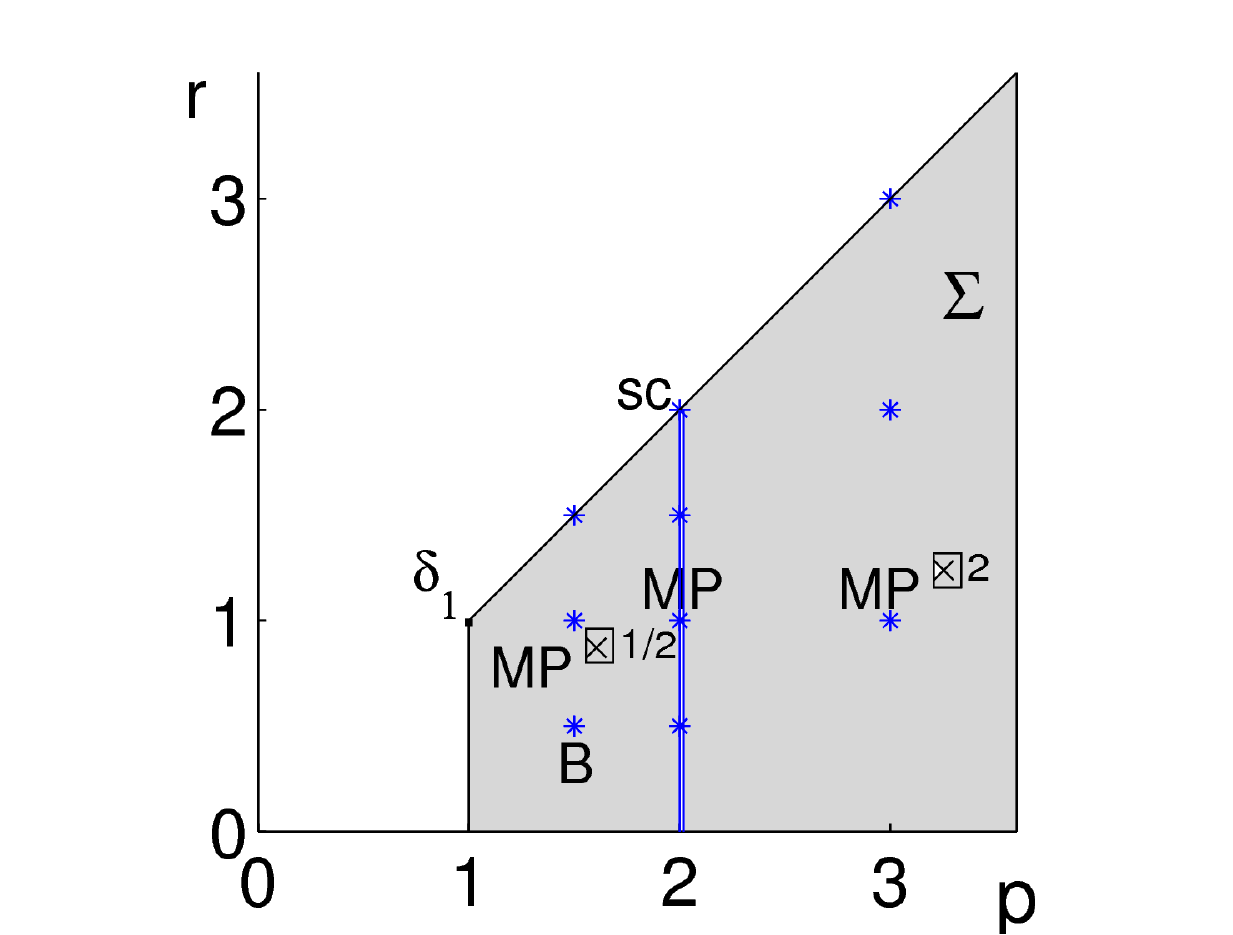}
\label{mapa2}
\end{figure}

\end{document}